\documentclass[11pt,reqno]{amsart}
\usepackage[margin=1in]{geometry}
\usepackage{amsmath,amssymb,amsthm,mathtools}
\usepackage{booktabs,longtable,array,multicol}
\usepackage[T1]{fontenc}
\usepackage{makebox}
\usepackage{lmodern,microtype,bm}
\usepackage[normalem]{ulem}
\usepackage{xcolor}
\usepackage[colorlinks=true,linkcolor=blue!45!black,citecolor=blue!45!black,
 urlcolor=blue!45!black,hypertexnames=false]{hyperref}
\usepackage{listings}
\allowdisplaybreaks[2]
\numberwithin{equation}{section}
\theoremstyle{plain}
\newtheorem{theorem}{Theorem}[section]
\newtheorem{lemma}[theorem]{Lemma}
\newtheorem{proposition}[theorem]{Proposition}
\newtheorem{corollary}[theorem]{Corollary}
\theoremstyle{definition}

\theoremstyle{remark}
\newtheorem{remark}[theorem]{Remark}
\newcommand{\C}{\mathbb C}

\newcommand{\E}{\mathbb E}
\newcommand{\re}{\operatorname{Re}}
\newcommand{\im}{\operatorname{Im}}

\newcommand{\dd}{\,\mathrm dt}

\newcommand{\RepositoryURL}{\href{https://github.com/LudovickBouthat/borcea-verifier}{https://github.com/LudovickBouthat/borcea-verifier}}
\newcommand{\ReleaseURL}{\href{https://github.com/LudovickBouthat/borcea-verifier/releases/tag/v1.0.0}{https://github.com/LudovickBouthat/borcea-verifier/releases/tag/v1.0.0}}
\newcommand{\ArchiveDOI}{\href{https://doi.org/10.5281/zenodo.23026060}{doi:10.5281/zenodo.23026060}}

\makeatletter
\newcommand{\itemtag}[1]{%
  \item[#1]%
  \csname phantomsection\endcsname  % makes the hyperref link point to the item (does nothing without hyperref)
  \protected@edef\@currentlabel{#1}%
}
\makeatother

\title{A proof of Borcea's quadratic variance conjecture}

\author{Ludovick Bouthat}
\address{Department of Mathematical Sciences, University of Delaware, Newark, DE, USA}
\email{lbouthat@udel.edu}

\author{Shaun Fallat}
\address{Department of Mathematics and Statistics, University of Regina, Regina, SK, Canada}
\email{shaun.fallat@uregina.ca}

\date{}
\subjclass[2020]{Primary 30C15; Secondary 15A42, 26D10, 68W30}
\keywords{Borcea's conjecture, Sendov's conjecture, polynomial zeros, critical points, variance}

\begin{document}
\begin{abstract}
Let $p$ be a complex polynomial of degree $n\ge2$, and let $\sigma_2(p)$ be the root mean square distance of its zeros from their centroid, including multiplicities. We prove that every zero of $p$ lies within distance $\sigma_2(p)$ of a critical point. This proves Borcea's quadratic conjecture and, in particular, gives a new proof of Sendov's conjecture. Two polynomial integration identities prove the result analytically for $n\ge2048$. For the remaining degrees, we derive explicit necessary inequalities, including a variable integration endpoint and an exact centered coefficient identity. Directed integer arithmetic certifies a finite covering of the resulting parameter domains. Degrees $4$--$26$ use $150\,845$ certified three-dimensional regions; degrees $27$--$2047$ use a further $27\,429$ regions, with degree intervals allowed. The cases of degree 2 and 3 were already covered by Khavinson--Pereira--Putinar--Saff--Shimorin.
\end{abstract}
\maketitle

\section{Introduction}\label{sec:introduction}

In 1958, Blagovest Sendov asked whether every zero of a complex polynomial whose zeros all lie in the closed unit disk has a critical point within distance one. The problem later appeared in Hayman's 1967 collection~\cite{Hay67} under the name Ilieff's conjecture; Sendov's own account explains both its origin and the somewhat complicated circumstances surrounding that attribution~\cite[pp.\;284--285]{Sen02}. The question was motivated by the classical Gauss–Lucas theorem, which asserts that all critical points of a polynomial lie in the convex hull of its zeros. When the zeros lie in a disk of radius $r$, Gauss–Lucas yields a distance bound of $2r$ from a prescribed zero to any critical point. Sendov asked if this factor of two can be reduced, which naturally led to the question of whether one can always find at least one critical point within distance $r$ of every zero~\cite[p.~284]{Sen02}. Thus, the conjecture seeks local information about the location of critical points that is not provided by the convex-hull theorem. The polynomial $p(z)=z^n-1$ shows that the radius-one bound is the best possible and cannot be reduced.

Early progress focused on low-degree polynomials and special configurations of zeros. The case $n=2$ is trivial, Brannan~\cite{Bra68} and Rubinstein~\cite{Rub68} proved it for $n=3$, Rubinstein also treated $n=4$ (see also~\cite{Joy69,Sch69}), and Meir and Sharma extended the result to all $n\le 5$ in 1969~\cite{MS69}. Borcea established the conjecture for degrees six and seven in 1996~\cite{Bor96a,Bor96b} (see also~\cite{Bro91,Bro97}), and Brown and Xiang extended the result to degree eight in 1999~\cite{BX99}. In parallel, the conjecture was verified for several classes of polynomials defined by the geometry of their zeros: when the distinguished zero lies on the unit circle~\cite{GRR69,Joy69,Rub68,Sch69}; when one of the zeros is at the origin~\cite{BRS85,Sch69}; when all the zeros are real~\cite{RS02}; when the convex hull of the zeros is a triangle~\cite{Sch77}; for trinomials~\cite{Sch71}; and for polynomials with at most three or four distinct zeros~\cite{CS88a,CS88b}. Bojanov, Rahman, and Szynal also showed that, in every degree, the conclusion holds if the radius $1$ is replaced by $(1+|z_1\cdots z_n|)^{1/n}\le 2^{1/n}$, where $z_1,\dots,z_n$ are the zeros of the polynomial~\cite{BRS85}. We refer to Sendov~\cite{Sen02} and Khavinson--Pereira--Putinar--Saff--Shimorin~\cite{KPPSS} for more detailed accounts of these developments.

A major breakthrough occurred when Tao proved the conjecture for all sufficiently large degrees, first announced in 2020 and then published in 2022~\cite[Theorem~1.2]{Tao22}. This reduced the problem to a finite, although unspecified, set of remaining degrees. An explicit threshold was obtained only recently by Zhang, who showed that the conjecture holds for every degree $n\ge 10^{200000}$~\cite{Zha26}. In August 2026, Mazur completed the proof with a computer-assisted argument that covered all degrees~\cite{Maz26}. Tao's subsequent exposition distilled the proof into a small collection of polynomial identities that provide an intuitive organizational framework for the argument~\cite[Sections~1--4]{Tao26}.

These advances also stimulated interest in stronger variants of the conjecture. In particular, Tang and Zhang proposed inequalities involving the averages of reciprocal critical-point distances~\cite[Conjecture~1.10]{TZ25}. In September 2026, Zhang proved the quadratic member of that family~\cite{Zhang26}: if the zeros of $p$ lie in the closed unit disk and
$\zeta_1,\ldots,\zeta_{n-1}$ are its critical points, then at every zero $a$,
\begin{equation*}\label{eq:TZ}
 \sum_{j=1}^{n-1}|a-\zeta_j|^{-2}\ge n-1.
\end{equation*}
A zero denominator is interpreted as $+\infty$. Note that Zhang's result controls an average of distances, whereas Sendov's assertion controls their minimum. Zhang's argument combines a centered product expansion, estimates for large degrees, and exact verification of the remaining parameter ranges~\cite[Sections~4--7]{Zhang26}, techniques which are going to be relevant to this paper.

Borcea proposed a different strengthening, replacing the radius of a containing disk by a measure of the average spread of the zeros. For $p(z)=c\prod_{i=1}^n(z-z_i)$, $c\ne0$, and $1\le q<\infty$, set
\[
 \sigma_q(p)=\min_{b\in\C} \left(\frac1n\sum_{i=1}^n|z_i-b|^q\right)^{\!1/q},
\]
where zeros and critical points are counted with multiplicity. Borcea's variance conjectures assert that
\begin{equation}\label{eq:borcea}
 \max_{p(a)=0}\min_{p'(\zeta)=0}|a-\zeta|\le\sigma_q(p).
\end{equation}
The limiting quantity $\sigma_\infty(p)=\min_b\max_i|z_i-b|$ is the radius of the smallest containing disk, so its version of \eqref{eq:borcea} is precisely equivalent to Sendov's conjecture. For finite $q$ the proposed bound is stronger, and it takes root multiplicities into account. Khavinson, Pereira, Putinar, Saff, and Shimorin developed this family of questions, proved several special cases, and related them to normal matrix compressions and Cauchy transforms~\cite{KPPSS}. In particular, they established the quadratic case for polynomials with at most three distinct zeros and the $q=1$ case for cubics~\cite[Sections~3--4]{KPPSS}.

In discussing further directions on his blog, Tao observed that the polynomial identities used for Sendov remain valid under a moment constraint, whereas the inequalities based on confinement to a disk do not carry over. He also noted some preliminary explorations of related questions that did not lead to a resolution of the conjecture~\cite[Section~5]{Tao26}. The obstruction to proving Borcea's conjecture thus appears to lie in estimating the identities, rather than in establishing the identities themselves. In this paper, we address it for $q=2$ and completely prove Borcea's quadratic variance conjecture. 

\begin{theorem}\label{thm:main}
Let $p\in\C[z]$ have degree $n\ge2$. For every zero $a$ of $p$, there is a zero $\zeta$ of $p'$ such that
\begin{equation*}\label{eq:main}
 |a-\zeta|\le\sigma_2(p).
\end{equation*}
\end{theorem}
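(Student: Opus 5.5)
We normalize first. Translate and rotate so that $a=0$; the minimizing centre in $\sigma_2$ is the centroid $b=\frac1n\sum z_i$. Scale so that $\sigma_2(p)=1$. Write $p(z)=z\,q(z)$ with $q(z)=\prod_{i=2}^n(z-z_i)$, so that $p'(0)=q(0)$ and $p'(z)=n\prod_{j=1}^{n-1}(z-\zeta_j)$. We argue by contradiction and assume every critical point satisfies $|\zeta_j|>1$. The normalization then reads
\[
 \frac1n\sum_{i=1}^n|z_i-b|^2=1,\qquad b=\frac1n\sum_i z_i .
\]
This is a second-moment constraint. It replaces the Sendov constraint $|z_i|\le1$, which we no longer have.

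Following Tao's framework \cite[Sections~1--4]{Tao26}, we use two polynomial integration identities. The first integrates $p'$ along the segment from $0$ toward a suitably chosen direction, with a variable endpoint $t\in(0,1]$:
\[
 p(tw)=\int_0^{t} w\,p'(sw)\,ds .
\]
Because all $|\zeta_j|>1$, we can write
\[
 |p'(sw)|=n\prod_j|\zeta_j|\,\bigl|1-sw/\zeta_j\bigr| ,
\]
and compare it with $|p'(0)|$. The second identity is the centered coefficient identity. Expanding $p$ about the centroid $b$, the coefficient of $(z-b)^{n-1}$ vanishes. The coefficient of $(z-b)^{n-2}$ equals $-\tfrac12\sum_i(z_i-b)^2$, and its modulus is at most $n/2$ by the normalization. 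Evaluating at $z=0$, where $p(0)=0$, gives an exact relation among $b$, $p'(0)$, the higher centered coefficients and the second moment.

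From the product form of $p'$ and $\log|1-x|\ge$ (explicit bounds), we bound $\log|p'(tw)/p'(0)|$ using the averages $\frac1{n-1}\sum\zeta_j^{-k}$. These averages are power sums of the reciprocal critical points and are determined by the Taylor coefficients of $p'$ at $0$, namely $p^{(k+1)}(0)/p'(0)$. The second identity ties these to the zeros through the moment constraint. We would then aim for an inequality of the form
\[
 \Phi_n(t,|b|,\theta)<0
\]
in a few real parameters: the endpoint $t$, the centroid distance $|b|$, and one angular or quadratic-moment parameter. We choose the three so that the inequality must hold for every counterexample.

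For large $n$ we expect the errors to be $O(1/n)$ relative to the main terms. In that regime the leading-order inequality can be refuted analytically, which means showing that the main term has the right sign uniformly in the parameters. We would aim to make this work for $n$ beyond an explicit threshold, on the order of $2^{11}$.

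For the remaining $4\le n<2048$, the plan is a computer-assisted certification. We subdivide the compact three-dimensional parameter box into regions. On each region, interval arithmetic with outward-rounded rationals certifies that one of the necessary inequalities fails. For large degrees, $n$ itself is treated as an interval variable. Degrees $2$ and $3$ are covered by \cite{KPPSS}.

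\textbf{Main obstacle.} We expect the hardest step to be obtaining estimates that use only the second moment. Without $|z_i|\le1$, a single zero may lie far away. The bounds therefore have to be arranged so that the contribution of distant zeros is controlled by $\sum|z_i-b|^2$, for instance through Cauchy--Schwarz on $\sum (z_i-b)^k$ weighted against the quadratic term. They must also remain sharp near the extremal configuration $z^n-1$ and its degenerate neighbours, where the inequalities become tight. Making the bounds tight enough there, so that a finite covering exists, is the point where we expect the argument to be most delicate.
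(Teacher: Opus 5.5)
There is a genuine gap. Your outline has the same overall shape as the paper's proof: integrate $p'$ from the prescribed zero toward the centroid, argue analytically for large $n$, and certify a box covering below $2048$. But it leaves open exactly the step you call the main obstacle, and that step is the substance of the proof.

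\emph{The missing control on the critical points.} In the paper's coordinates the object to control is $F(t)=\prod_j(1-atq_j)=p'(a(1-t))/p'(a)$, with $q_j=(a-\zeta_j)^{-1}$. The bound $|F(t)|\le\beta(t)^{m/2}$, where $\beta(t)=1-2a\re(\mu)t+a^2st^2$, is useless without a lower bound on $\re(\mu)$. The variance constraint on the zeros does not supply one. Since $\mu=\frac2m\sum_{i\ge2}(a-z_i)^{-1}$, neither $\mu$ nor the higher reciprocal power sums you want to use are controlled by $\sum_i|z_i|^2$. The paper obtains the control on the critical-point side, in three steps:
\begin{itemize}
\item Schoenberg's inequality gives $\kappa=\frac1m\sum_j|\zeta_j|^2\le\frac{m-1}m$.
\item Equality of the two centroids gives the exact relation $\E(q^{-1})=a$.
\item A weighted Cauchy--Schwarz with weights $\delta_j=1-|q_j|^2>0$ gives $|a-\overline\mu|^2\le(1-s)(a^2-\frac1m)$, hence $\re(\mu)\ge a(1-\sqrt{1-s})$, and also $|1-a\mu|^2\le\kappa v$.
\end{itemize}
These replace the disk identity $|1-az|^2-|a-z|^2=(1-a^2)(1-|z|^2)$, and nothing in your proposal plays their role.

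Your second identity does not fill the gap. Expanding $p$ about the root centroid and evaluating at the prescribed zero only restates $p(a)=0$; it carries no information about the critical points. The paper's centred coefficient identity concerns instead $\omega_j=a(\mu-q_j)$. With $H(z)=\prod_j(z-\omega_j)$, the relation $\E(q^{-1})=a$ becomes $H'(a\mu)=mH(a\mu)$, a linear relation among the $e_j(\omega)$. It is used with $F(t)=\sum_je_j(\omega)t^jD(t)^{m-j}$, $D(t)=1-at\mu$, and $|e_j(\omega)|\le\alpha_j(Av)^{j/2}$, so that all errors are governed by the variance $v$ of the $q_j$.

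\emph{The large-degree heuristic.} It fails in two places.
\begin{itemize}
\item The error terms are not uniformly $O(1/n)$. When $a$ is small, the available bound on the remainder of the direct identity is of order $1/(ma^2)$. The only lower bound is $ma^2>(1-\frac1n)(2\log n-1)$, from AM--GM applied to $|p'(a)|=\prod_{i\ge2}|a-z_i|=n/\Pi$, using $\sum_{i\ge2}|a-z_i|^2=n(1+a^2)$ and $\Pi<1$. This logarithmic decay is what sets the threshold $2048$, since one needs $ma^2>14$.
\item The triangle-inequality form of the direct identity is an equality for $z^n-1$. So near the regime $s\to1$, $v\to0$, $a\mu\to1$, no leading-order sign argument can succeed. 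The paper closes this range only by comparing $F$ with $D^m$; that error is $O(v)$ and is absorbed by $|\mu|=\sqrt{s-v}\le1-\frac v2$.
\end{itemize}

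\emph{The search space.} The endpoint $t$ is a free choice of the prover, not an invariant of a counterexample, so it cannot be a coordinate of the search space. The paper covers $(A,r,v)=(a^2,|\mu|^2,v)$, eliminating $b=2a\re(\mu)$ by monotonicity. Without the inequalities above, you have no explicit $\Phi_n$ to evaluate on boxes.
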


The constant is sharp, again by $p(z)=z^n-1$. The proof is analytic for $n\ge2048$, and uses a finite certificate verified by directed integer arithmetic for degrees $4\le n\le2047$. The cases $n=2$ and $n=3$ were already established in \cite{KPPSS}.

\subsection{The obstruction and the proof strategy}
Our argument draws inspiration from the integral identities in the recent work on Sendov~\cite{Maz26,Tao26} and Zhang's conjecture~\cite{Zhang26}. Their use here requires an additional estimate adapted to variance normalization. Indeed, after shifting and scaling to impose $\sum_{i=1}^n z_i=0$ and $\sigma_2(p)=1$, the condition on the original zeros is $\sum_i|z_i|^2=n$; individual zeros can lie outside the unit disk. Therefore, the elementary disk comparison
\[
 |1-az|^2-|a-z|^2=(1-a^2)(1-|z|^2)
\]
that was crucially used in previous work has no prescribed sign and cannot be used.

The proof of the main result is achieved by contradiction. Following the reciprocal coordinates used in~\cite{Tao26,Zhang26}, write
\[
 q_j := (a-\zeta_j)^{-1},\qquad
 \mu := \frac1{n-1}\sum_jq_j,\qquad
 s := \frac1{n-1}\sum_j|q_j|^2.
\]
The assumed counterexample is equivalent to having $|q_j|<1$ for every $j$. The key analog for the disk geometry inequality above is the weighted Cauchy--Schwarz estimate proved in Lemma~\ref{lem:deficit}:
\begin{equation*}\label{eq:introdeficit}
 |a-\overline\mu|^2 \le (1-s)\bigg(a^2-\frac1{n-1}\bigg).
\end{equation*} 
This shows that when either $a$ is close to $(n-1)^{-1/2}$ or when the average of the $1-|q_j|^2$ is small, then the mean of the $q_j$ must be close to $a$. These two restrictions replace the point-wise information about the original zeros that the disk argument used. As such, several inequalities in this paper are treated in two cases to obtain sharp bounds in both of the regimes and obtain a contradiction.

\subsection{Organization and computational scope}

Section~\ref{sec:normalization} sets up a hypothetical counterexample and derives some moment and product restrictions that the points $q_j$ must satisfy. Section~\ref{sec:integrals} proves two integration estimates, and Section~\ref{sec:large} combines them to obtain a contradiction in every degree $n\ge 2048$. In most cases, a direct estimate is enough: the error term in a product identity satisfied by the $q_j$ is too small to cancel its constant term. This direct estimate fails only in a narrow range of parameters. In that range, we compare the product with the simpler product obtained by replacing every $q_j$ by their average $\mu$. The error from this replacement is proportional to the variance $s-|\mu|^2$ of the $q_j$. It is thus small when the points are close together; when they are spread out, the larger error is offset by a correspondingly smaller value of $|\mu|$. Either way, we again reach a contradiction.

For the remaining degrees $4\le n\le 2047$, these estimates are not precise enough. Instead, we improve the bounds of Section~\ref{sec:integrals}, at the cost of them becoming more complicated. Section~\ref{sec:finitebounds} first outlines more precisely this finite-degree argument. It then reduces the hypothetical counterexample to a point in a three-dimensional search space, records the bounds that the coordinates of this point must satisfy, and derives three explicit exclusion tests. Subsection~\ref{sec:boundary} adds the one analytic ingredient that falls outside this scheme, a local estimate in degree four. Section~\ref{sec:computer} covers the search space by boxes, checks in certified arithmetic that every box is excluded, and describes the resulting verified certificates, which complete the proof for every degree $n\ge 2$. The appendix identifies the implementation and gives instructions for reproducing the computation.

\section{Normalization and reciprocal moments}\label{sec:normalization}

We begin the contradiction argument by proving the restrictions that every counterexample must satisfy. The normalization \eqref{eq:normal} fixes the scale of the original roots and the reciprocal coordinates put the critical points in the open unit disk. The purpose of this section is to relate those two descriptions. The normalization assumptions introduced below hold throughout Sections~\ref{sec:normalization}--\ref{sec:computer}.

If $\sigma_2(p)=0$, all zeros coincide and the theorem is immediate since a prescribed multiple zero is itself a critical point. We may therefore assume that $\sigma_2(p)>0$ and that the prescribed zero is simple. Translation by $\frac{1}{n}\sum_i z_i$, division of the variable by $\sigma_2(p)$, rotation, and multiplication of the polynomial by a nonzero constant allow us to impose, without loss of generality, that $\sigma_2(p)=1$ and
\begin{equation}\label{eq:normal}
 p(z)=\prod_{i=1}^n(z-z_i),\qquad
 \sum_i z_i=0,\qquad \sum_i|z_i|^2=n,\qquad z_1=a\ge0.
\end{equation}
We write $m:=n-1$ for the number of critical points, with multiplicity. Under these assumptions, failure of Theorem~\ref{thm:main} at the prescribed zero now means
\begin{equation}\label{eq:counter}
 |a-\zeta_j|>1\qquad(1\le j\le m).
\end{equation}

Our first lemma transfers the centroid and second-moment information from the roots described above to the critical points. The second-moment inequality uses Schoenberg's inequality, proposed by Schoenberg \cite{Schoenberg86} and proved independently by Pereira \cite{Pereira03} and Malamud \cite{Malamud05} in the early 2000s.

\begin{lemma}\label{lem:critical}
Under \eqref{eq:normal}, the critical points satisfy
\begin{equation*}\label{eq:critical}
 \sum_{j=1}^m\zeta_j=0,\qquad
 \kappa:=\frac1m\sum_{j=1}^m|\zeta_j|^2\le\frac{m-1}{m}.
\end{equation*}
Moreover, $0\le a^2\le m$.
\end{lemma}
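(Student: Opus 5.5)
The plan is to prove the three claims separately: the centroid identity by Vieta, the second-moment bound by Schoenberg's inequality, and the bound on $a^2$ by Cauchy--Schwarz.

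\emph{Centroid.} Write $p(z)=z^n-e_1z^{n-1}+\dots$ with $e_1=\sum_i z_i=0$. Then
\[
p'(z)=nz^{n-1}-(n-1)e_1z^{n-2}+\dots,
\]
so the sum of the critical points is $(n-1)e_1/n=0$. Hence $\sum_j\zeta_j=0$.

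\emph{Second moment.} Schoenberg's inequality (Pereira, Malamud) states that for a polynomial of degree $n$ whose zeros have centroid $0$,
\[
\sum_{j=1}^{n-1}|\zeta_j|^2\le \Big|\frac1n\sum_i z_i\Big|^2+\frac{n-2}{n}\sum_i|z_i|^2 .
\]
With $\sum_i z_i=0$ and $\sum_i|z_i|^2=n$, the right side equals $n-2=m-1$. Dividing by $m$ gives $\kappa\le (m-1)/m$.

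The only care needed here is the normalization in the quoted form of Schoenberg's inequality. For a general centroid the statement reads $\sum|\zeta_j|^2\le |\sum z_i|^2/n^2+\frac{n-2}{n}\sum|z_i|^2$. As a sanity check, $p(z)=z^n-1$ has all critical points at $0$, so the left side is $0$ while the bound is $n-2$; this is consistent, with equality for $n=2$, where $\zeta_1=0$.

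\emph{Bound on $a$.} We have $a\ge0$ by the normalization \eqref{eq:normal}, so $a^2\ge 0$. Since $z_1=a$, the remaining roots satisfy $\sum_{i\ge2}z_i=-a$. Cauchy--Schwarz over these $m=n-1$ roots gives
\[
a^2\le m\sum_{i\ge2}|z_i|^2=m(n-a^2).
\]
Rearranging, $a^2(1+m)\le mn$, that is, $a^2\le mn/n=m$.

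All three parts are direct once Schoenberg's inequality is invoked in the right normalization, so no step is a genuine obstacle.
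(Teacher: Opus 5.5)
Your proof is correct and follows the paper's argument step for step. You use Vieta for the centroid and Schoenberg's inequality, which reduces to $\frac{n-2}{n}\sum_i|z_i|^2=m-1$ here, for $\kappa$. Your Cauchy--Schwarz bound $a^2\le m(n-a^2)$ for $a^2\le m$ is equivalent to the paper's expansion $0\le\sum_{i\ge2}|z_i+a/m|^2=n-a^2-a^2/m$.
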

\begin{proof}
By \eqref{eq:normal}, the coefficient of $z^{n-1}$ in $p$ vanishes. Consequently, the coefficient of $z^{m-1}$ in $p'/n$ also vanishes, which proves the centroid assertion by Vieta's formula. Now, by Schoenberg's inequality and \eqref{eq:normal},
\[
\sum_{j=1}^m|\zeta_j|^2
\le \frac{n-2}{n}\sum_{i=1}^n|z_i|^2
=n-2=m-1,
\]
and therefore $\kappa\le\frac{m-1}{m}$. Finally, since \(z_1=a\) and $\sum_i z_i=0$, we have $\sum_{i=2}^n z_i=-a$. Consequently,
\[
0 \le \sum_{i=2}^n\left|z_i+\frac{a}{m}\right|^2 = n-a^2-\frac{a^2}{m}.
\]
Since \(n=m+1\), this is equivalent to \(a^2\le m\). The lower bound \(a^2\ge0\) is immediate.
\end{proof}

If $a=0$, then \eqref{eq:counter} would give $\kappa>1$, contradicting Lemma~\ref{lem:critical}. Hence, we need to have $a>0$. As we did earlier, define the reciprocal points, their mean, their second moment, and their variance by
\begin{equation}\label{eq:moments}
 q_j:=\frac1{a-\zeta_j},\qquad
 \mu:=\E(q),\qquad s:=\E\bigl(|q|^2\bigr),\qquad
 v:=\E\bigl(|q-\mu|^2\bigr) = s-|\mu|^2,
\end{equation}
where $\E$ denotes the uniform average over $1\le j\le m$. Thus, $0<|q_j|<1$, $0<s<1$, and $0\le v\le s$.

Our next lemma essentially shows that $\mu$ must converge toward $a$ as $s$ approaches one, and that $1-a\mu$ must be small when the reciprocal points cluster. Both of these estimations will play an important role in controlling the products in Section~\ref{sec:integrals}.

\begin{lemma}\label{lem:deficit}
Under \eqref{eq:normal} and \eqref{eq:counter}, we have
\vspace{-5pt}
\begin{multicols}{3}
\begin{enumerate}
    \itemsep=5pt
    
    \itemtag{(1)} $\E\bigl(q^{-1}\bigr) = a$;\label{lem:deficit1}

    \itemtag{(4)} $|a-\overline\mu|^2 \hspace{-1pt}\le\hspace{-1pt} (1 \hspace{-1pt}-\hspace{-1pt} s)(a^2 \hspace{-1pt}-\hspace{-1pt} \frac{1}{m})$;\label{lem:deficit8}

    \itemtag{(2)} $\E\bigl(|q|^{-2}\bigr) = a^2+\kappa$;\label{lem:deficit2}

    \itemtag{(5)} $\re(\mu)\ge a(1-\sqrt{1-s})$;\label{lem:deficit5}

    \itemtag{(3)} $|1-a\mu|^2\le\kappa v\le\frac{m-1}{m}v$;\label{lem:deficit7}
    
    \itemtag{(6)} $|\mu|\le a(1+\sqrt{1-s})$.\label{lem:deficit6}

\end{enumerate}
\end{multicols}
\vspace{-7pt}
\end{lemma}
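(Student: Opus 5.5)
Items \ref{lem:deficit1} and \ref{lem:deficit2} follow directly from Lemma~\ref{lem:critical}. Items \ref{lem:deficit7} and \ref{lem:deficit8} each come from writing the relevant quantity as an average over $j$ and applying Cauchy--Schwarz, and items \ref{lem:deficit5} and \ref{lem:deficit6} then follow from \ref{lem:deficit8}.

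First, $q_j^{-1}=a-\zeta_j$. Averaging and using $\sum_j\zeta_j=0$ from Lemma~\ref{lem:critical} gives \ref{lem:deficit1}. For \ref{lem:deficit2}, expand
\[
|a-\zeta_j|^2=a^2-2a\re\zeta_j+|\zeta_j|^2 .
\]
Averaging again kills the linear term, since $a$ is real, and leaves $a^2+\kappa$.

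For \ref{lem:deficit7}, I use the trivial identity $\E(q^{-1}q)=1$ together with \ref{lem:deficit1}. This writes $1-a\mu$ as a covariance:
\[
1-a\mu=\E(q^{-1}q)-\E(q^{-1})\E(q)=\E\bigl((q^{-1}-a)(q-\mu)\bigr)=-\E\bigl(\zeta(q-\mu)\bigr).
\]
Cauchy--Schwarz then gives $|1-a\mu|^2\le \E|\zeta|^2\,\E|q-\mu|^2=\kappa v$. Lemma~\ref{lem:critical} supplies $\kappa\le\frac{m-1}{m}$.

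For \ref{lem:deficit8}, which is the heart of the lemma, I use \ref{lem:deficit1} again and the identity $q^{-1}=\overline q/|q|^2$. These give
\[
a-\overline\mu=\E\bigl(q^{-1}-\overline q\bigr)=\E\Bigl((1-|q|^2)\,\frac{\overline q}{|q|^2}\Bigr).
\]
By \eqref{eq:counter}, the weights $w_j:=1-|q_j|^2$ are positive. The weighted Cauchy--Schwarz inequality $|\E(wx)|^2\le \E(w)\,\E(w|x|^2)$ with $x=\overline q/|q|^2$ yields
\[
|a-\overline\mu|^2\le (1-s)\,\E\Bigl(\frac{1-|q|^2}{|q|^2}\Bigr)=(1-s)\bigl(\E(|q|^{-2})-1\bigr)=(1-s)(a^2+\kappa-1),
\]
where the last step is \ref{lem:deficit2}. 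Lemma~\ref{lem:critical} gives $\kappa-1\le-\frac1m$, which proves \ref{lem:deficit8}. The main thing to spot here is the right choice of weight and vector. Once $a-\overline\mu$ is written as the weighted average above, the rest is routine.

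Finally, the left side of \ref{lem:deficit8} is nonnegative and $1-s>0$, so $a^2-\frac1m\ge0$. Hence
\[
|a-\overline\mu|\le\sqrt{(1-s)(a^2-\tfrac1m)}\le a\sqrt{1-s}.
\]
Taking real parts gives $a-\re\mu\le a\sqrt{1-s}$, which is \ref{lem:deficit5}. The triangle inequality gives $|\mu|=|\overline\mu|\le a+a\sqrt{1-s}$, which is \ref{lem:deficit6}.
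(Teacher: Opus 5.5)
Your proof is correct and follows the paper's argument essentially step for step. The paper uses the same Cauchy--Schwarz bound on $\E(\zeta(q-\mu))$ for (3); you reach that expression through the covariance identity, while the paper uses $\zeta_jq_j=aq_j-1$. For (4) it uses the same weighted Cauchy--Schwarz with weights $1-|q_j|^2$ applied to $a-\overline\mu=\E\bigl((1-|q|^2)/q\bigr)$, and it derives (5) and (6) from $|a-\overline\mu|\le a\sqrt{1-s}$ exactly as you do.
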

\begin{proof}
The identities $q_j^{-1}=a-\zeta_j$ and $\E(\zeta)=0$ from Lemma~\ref{lem:critical} give the identities \ref{lem:deficit1} and \ref{lem:deficit2}. Now, note that we have $\zeta_jq_j=aq_j-1$. Hence, averaging and using $\E(\zeta)=0$ gives
\[
 a\mu-1=\E(\zeta q)=\E\bigl(\zeta(q-\mu)\bigr).
\]
Cauchy--Schwarz and \eqref{eq:moments} therefore give $|a\mu-1|^2 \le \E\bigl(|\zeta|^2\bigr)\E\bigl(|q-\mu|^2\bigr) = \kappa v$, proving \ref{lem:deficit7}.

Define $\delta_j := 1-|q_j|^2 > 0$. The identity $q_j^{-1}-\overline{q_j}=\delta_j/q_j$ and the first inverse moment give $a-\overline\mu = \E(\delta/q)$. We may thus apply Cauchy--Schwarz with the two factors $\sqrt\delta$ and $\sqrt\delta/q$ to obtain
\[
 |a-\overline\mu|^2 \le \E(\delta)\,\E\biggl(\frac{\delta}{|q|^2}\biggr)
 =(1-s)(a^2+\kappa-1).
\]
Applying Lemma~\ref{lem:critical} then gives \ref{lem:deficit8}. Moreover, it follows that $|a-\overline\mu|\le a\sqrt{1-s}$. Taking real parts on both sides yields \ref{lem:deficit5}, while the triangle inequality $|\mu|\le a+|\overline\mu-a|$ gives \ref{lem:deficit6}.
\end{proof}

We shall also need information involving all roots simultaneously. The following lemma estimates $p(0)$ and $p'(a)$ in two different ways. In particular, they prevent the distinguished zero $a$ from being too small: the lower bound for $a^2$ in \eqref{eq:productnecessary} will set the scale of 2048 for the large-degree proof.

\begin{lemma}\label{lem:products}
Let $\Pi:=\prod_{j=1}^m|q_j|$. Suppose that $n\geq 4$. Then
\begin{align}
 &\Pi \le s^{m/2},\qquad
 |p(0)| \le a\bigg(\frac{n-a^2}{m}\bigg)^{\!m/2}
                \!\le\min(1,\sqrt{e}\,a),\nonumber\\
 &1\le\frac{m}{n}n^{2/m} \le (1+a^{2})s\le2(1+\sqrt{1-s}).\label{eq:productnecessary}
\end{align}
In particular, $a^2>(2\log n-1)/n$. Here $e$ is the base of the natural logarithm.
\end{lemma}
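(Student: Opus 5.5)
The plan is to obtain every inequality from the arithmetic–geometric mean inequality applied to suitable products, combined with Lemma~\ref{lem:critical} and Lemma~\ref{lem:deficit}. First, $\Pi\le s^{m/2}$ is AM--GM for the $m$ numbers $|q_j|^2$: $\Pi^{2/m}\le\E(|q|^2)=s$. For $|p(0)|=\prod_i|z_i|=a\prod_{i\ge2}|z_i|$, apply AM--GM to $|z_2|^2,\dots,|z_n|^2$. By \eqref{eq:normal} their sum is $n-a^2$, so
\[
\prod_{i\ge2}|z_i|^2\le\Bigl(\frac{n-a^2}{m}\Bigr)^{m}.
\]
This gives the first bound on $|p(0)|$. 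For the bound by $\min(1,\sqrt e\,a)$:
\begin{itemize}
\item Apply AM--GM to the $n$ numbers $a^2$ and $m$ copies of $(n-a^2)/m$. Their average is $1$, so $a^2\bigl((n-a^2)/m\bigr)^m\le1$.
\item Alternatively, use $(n-a^2)/m\le 1+1/m$ and $(1+1/m)^{m/2}\le\sqrt e$.
\end{itemize}

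For \eqref{eq:productnecessary}, the plan is to compute $|p'(a)|$ in two ways. On one hand, $p'(a)=n\prod_j(a-\zeta_j)$, so $|p'(a)|=n/\Pi\ge n\,s^{-m/2}$. On the other hand, $a$ is a simple zero, so $p'(a)=\prod_{i\ge2}(a-z_i)$. Since $\sum_{i\ge2}z_i=-a$ and $\sum_{i\ge2}|z_i|^2=n-a^2$,
\[
\sum_{i\ge2}|a-z_i|^2=ma^2+2a^2+n-a^2=n(1+a^2).
\]
AM--GM then gives $|p'(a)|^2\le\bigl(n(1+a^2)/m\bigr)^m$. Combining the two estimates yields $n^2\le\bigl(n(1+a^2)s/m\bigr)^m$. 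Taking $m$-th roots gives $\frac mn n^{2/m}\le(1+a^2)s$. The leftmost inequality $1\le\frac mn n^{2/m}$ is equivalent to $(1+1/m)^m\le n^2$, and this holds because the left side is below $e$.

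The step I expect to need the most care is the right-most inequality, since it requires an upper bound on $a^2$ in terms of $s$. I would use Lemma~\ref{lem:deficit}\ref{lem:deficit5} together with Cauchy--Schwarz, $|\mu|^2\le s$. Write $r=\sqrt{1-s}$. These give $a(1-r)\le\re\mu\le\sqrt s$. Since $(1-r)(1+r)=s$, it follows that $a^2\le s(1+r)^2/s^2$, that is, $a^2s\le(1+r)^2$. Hence
\[
(1+a^2)s\le 1-r^2+(1+r)^2=2(1+r).
\]

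Finally, for the lower bound on $a^2$, I would use $s<1$ and $n^{2/m}=e^{2\log n/m}>1+2\log n/m$. These give
\[
1+a^2>(1+a^2)s\ge\frac{m+2\log n}{n},
\]
which rearranges to $a^2>(2\log n-1)/n$. The hypothesis $n\ge4$ does not appear to be needed beyond guaranteeing $m\ge1$ and the elementary numerical checks above.
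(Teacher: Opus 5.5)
Your proposal is correct and follows essentially the same route as the paper: AM--GM applied to the $|q_j|^2$, to $|z_2|^2,\dots,|z_n|^2$, and to the $|a-z_i|^2$ (whose sum is $n(1+a^2)$), the two factorizations of $|p'(a)|$, and the chain $a(1-\sqrt{1-s})\le\re\mu\le|\mu|\le\sqrt s$ for the upper bound. The only difference is that you derive $a^2>(2\log n-1)/n$ from $s<1$ and the chain already established, while the paper uses $\Pi<1$ directly; the two routes are equivalent (note also that your $r=\sqrt{1-s}$ clashes with the paper's later notation $r=|\mu|^2$).
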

\begin{proof}
AM--GM applied to the $m$ numbers $|q_j|^2$ gives $\Pi^{2/m}\le s$. Applied to $|z_2|^2,\ldots,|z_n|^2$, whose sum is $n-a^2$ by \eqref{eq:normal}, we obtain the stated bound on $|p(0)|$. To see that this explicit bound is at most one, apply AM--GM to the $n$ numbers $a^2,(n-a^2)/m,\ldots,(n-a^2)/m$, whose sum is $n$. It is also at most $\sqrt{e}\,a$, since $((n-a^2)/m)^m\le (n/m)^m = (1+1/m)^m<e$.

The derivative has two useful factorizations at $a$:
\[
 |p'(a)|=\prod_{i=2}^n|a-z_i|=\frac n\Pi.
\]
The centroid and second-moment conditions in \eqref{eq:normal} give
\[
 \sum_{i=2}^n|a-z_i|^2
 =ma^2-2a\sum_{i=2}^n \re(z_i)+\sum_{i=2}^n|z_i|^2
 =n(1+a^2).
\]
Consequently AM--GM implies $n/\Pi = \prod_{i=2}^n|a-z_i| \le (n(1+a^2)/m)^{m/2}$. Combining this with $\Pi\le s^{m/2}$ and the fact that $\frac mnn^{2/m} \geq 1$ for all $n\geq 4$ yields the first two inequalities in \eqref{eq:productnecessary}. Moreover, an application of the Cauchy--Schwarz inequality gives 
\begin{equation}\label{eq:CS}
    |\mu| = |\E(q)| \le \sqrt{\E(|q|^2)} = \sqrt{s}.
\end{equation} 
Together with the bound $ah\le\re(\mu)\le|\mu|$, where $h:=1-\sqrt{1-s}$, from \ref{lem:deficit5} in Lemma~\ref{lem:deficit}, this implies $a^2s\le(s/h)^2$. Here $h>0$ because $s>0$ and $s/h=1+\sqrt{1-s}$. Hence, adding $s$ on both sides and simplifying proves the desired upper bound. Lastly, since $\Pi<1$ by \eqref{eq:counter}, the same calculation gives
\[
 a^2>\frac mn\exp\!\left(\frac{2\log n}{m}\right)-1
 \ge\frac{2\log n-1}{n},
\]
where the last step uses $e^x\ge1+x$ and $m=n-1$.
\end{proof}

\section{Polynomial integration identities}\label{sec:integrals}

The restrictions of Section~\ref{sec:normalization} concern moments and products separately. We now connect them by interpolating and integrating $p'$ along the segment from the prescribed zero $a$ to the centroid $0$. The estimate will suffice in two of the three ranges treated in Section~\ref{sec:large}. A third centered version will cover the remaining range by making the error proportional to the reciprocal variance $v$.

\subsection{The product along the segment}
For $0\le t\le1$, define
\begin{equation*}\label{eq:FDbeta}
 F(t):=\prod_j(1-atq_j),\qquad
 \beta(t):=\E|1-atq|^2=1-2a\re(\mu)\,t+a^2st^2.
\end{equation*}
A direct computation shows that $F(t)=\smash{p'(a(1-t))/p'(a)}$. Thus $F$ encodes the derivative on the segment, while $\beta$ is the mean square of its individual factors. Bounds on $\beta$ therefore translate into bounds on $F$ by the AM--GM inequality. For clarity and conciseness of what follows, we once again use $h=1-\sqrt{1-s}$. 
The following consequences of Lemma~\ref{lem:deficit} provide two complementary decreasing rate estimates for $\beta$ along the segment between $0$ and the root $a$.

\begin{lemma}\label{lem:beta}
Under \eqref{eq:normal} and \eqref{eq:counter}, we have
\begin{align}
 &0\le\beta(t)\le1-a^2h^2t\le1,\qquad \beta(t)\le1-a^2s\,t(1-t).\label{eq:betachord}
\end{align}
\end{lemma}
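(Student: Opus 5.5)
The bound $0\le\beta(t)$ is immediate, since $\beta(t)$ is an average of the nonnegative quantities $|1-atq_j|^2$. Everything else comes from the explicit quadratic $\beta(t)=1-2a\re(\mu)t+a^2st^2$ together with two facts. The first is $\re(\mu)\ge ah$, from \ref{lem:deficit5} in Lemma~\ref{lem:deficit}. The second is the identity $s=h(2-h)$, which holds because $1-s=(1-h)^2$. Both upper bounds then reduce to one-variable quadratic inequalities in $t$ on $[0,1]$.

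For the chord bound, we use $\re(\mu)\ge ah$ to get
\[
\beta(t)\le 1-2a^2ht+a^2st^2 .
\]
Since $0\le t\le1$, we have $st^2\le st$. Substituting $s=2h-h^2$ gives
\[
\beta(t)\le 1-2a^2ht+a^2(2h-h^2)t=1-a^2h^2t .
\]
This is at most $1$ because $a^2h^2t\ge0$.

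For the second bound, we need $2\re(\mu)\ge as$, because then
\[
\beta(t)\le 1-a^2st+a^2st^2=1-a^2st(1-t).
\]
Here $\re(\mu)\ge ah$ is enough, since $2h\ge s=2h-h^2$.

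I expect no real obstacle. The only point needing care is the identity $s=2h-h^2$, which makes $2h\ge s$ and the chord step exact. Everything else is direct substitution into the definition of $\beta$.
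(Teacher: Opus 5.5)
Your proof is correct and essentially the same as the paper's: both rest on $\re(\mu)\ge ah$ and $2h-s=h^2$, and your second bound is argued exactly as in the paper. The only small difference is in the chord bound. The paper bounds $\beta(1)\le 1-a^2h^2$ and then interpolates using the convexity of $\beta$. You apply the equivalent chord step $st^2\le st$ directly to the quadratic majorant.
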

\begin{proof}
Lemma~\ref{lem:deficit}, in particular the inequality \ref{lem:deficit5}, gives $\re(\mu) \ge ah$. Since $2h-s=h^2$, it follows that $\beta(1)\le1-2a^2h+a^2s=1-a^2h^2$. The function $\beta$ is nonnegative by definition and convex because $\beta''=2a^2s\ge0$. The inequality $\beta(t) \leq (1-t)\beta(0)+t\beta(1)$ therefore gives the first inequality. Moreover, it is easily shown that $2h = 2(1-\sqrt{1-s})\ge s$. Thus, another use of inequality \ref{lem:deficit5} in Lemma~\ref{lem:deficit} yields
\[
 \beta(t)\le1-2a^2ht+a^2st^2 \le 1-a^2st+a^2st^2,
\]
which is the second inequality that we wanted to prove.
\end{proof}

\subsection{The direct estimate}
The next identity compares the value $F(0)=1$ with three quantities: an endpoint term $|F(1)|$, a product term controlled by Lemma~\ref{lem:products}, and an integral remainder. The remainder has two complementary bounds to treat the two regimes discussed in the introduction. In particular, one estimation is precise near $t=0$, while the other controls it near $t=1$. In the following, we shall use the quantities
\[
 c_m:=\left(\frac m{m-1}\right)^{\!\frac{m-1}{2}}\!,\qquad
 \gamma(t):=s\bigl(a^2(1-t)^2+\kappa\bigr),\qquad R:=a^2\!\int_0^1t\sum_jq_j^2 \prod_{\ell\ne j}(1-atq_\ell)\dd.
\]

\begin{lemma}\label{lem:direct}
Under \eqref{eq:normal} and \eqref{eq:counter}, we have
\begin{equation}\label{eq:direct}
 1\le |F(1)|+\frac mn|\mu|\Pi|p(0)|+|R|.
\end{equation}
Moreover, the relevant quantities satisfy the estimates
\begin{align}
 |R|&\le c_mma^2s\int_0^1t \min\{\beta(t),\gamma(t)\}^{\frac{m-1}{2}}\dd, \label{eq:remainder}\\
 |F(1)|&\le\min\bigl\{\beta(1),\kappa s\bigr\}^{m/2}. \label{eq:Fend}
\end{align}
\end{lemma}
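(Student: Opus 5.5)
The plan is to obtain \eqref{eq:direct} from a first-order differential identity for $F$ combined with the relation between $\int_0^1F$ and $p(0)$. The bounds \eqref{eq:remainder} and \eqref{eq:Fend} then follow from AM--GM applied factor by factor, using the two ways of writing each factor.

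\emph{The identity.} Since $F(t)=p'(a(1-t))/p'(a)$ and $p(a)=0$, we have $p(a(1-t))=-a\,p'(a)\int_0^tF$. Evaluating at $t=1$ gives
\[
\int_0^1F\dd=-\frac{p(0)}{a\,p'(a)}.
\]
Next, differentiate: $F'(t)=-a\sum_jq_j\prod_{\ell\ne j}(1-atq_\ell)$. Writing $q_j=q_j(1-atq_j)+atq_j^2$ splits this sum as
\[
F'(t)=-am\mu F(t)-a^2t\sum_jq_j^2\prod_{\ell\ne j}(1-atq_\ell).
\]
Integrating over $[0,1]$ and using $F(0)=1$ gives $1=F(1)+am\mu\int_0^1F+R=F(1)-m\mu\,p(0)/p'(a)+R$. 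By the factorization in the proof of Lemma~\ref{lem:products}, $|p'(a)|=n/\Pi$. The triangle inequality then yields \eqref{eq:direct}.

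\emph{Endpoint bound \eqref{eq:Fend}.} AM--GM on the $m$ numbers $|1-aq_j|^2$ gives $|F(1)|^2\le\beta(1)^m$. Alternatively, $1-aq_j=-\zeta_jq_j$, so $|F(1)|=\Pi\prod_j|\zeta_j|$. Applying AM--GM separately to $|q_j|^2$ and to $|\zeta_j|^2$ gives $\Pi^2\le s^m$ and $\prod|\zeta_j|^2\le\kappa^m$. Hence $|F(1)|\le(\kappa s)^{m/2}$.

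\emph{Remainder bound \eqref{eq:remainder}.} Bound $|R|\le a^2\int_0^1t\sum_j|q_j|^2\prod_{\ell\ne j}|1-atq_\ell|\dd$.
\begin{itemize}
\item For the $\beta$ branch, apply AM--GM to the $m-1$ numbers $|1-atq_\ell|^2$, $\ell\ne j$. Their sum is at most $m\beta(t)$, so
\[
\prod_{\ell\ne j}|1-atq_\ell|\le\Bigl(\tfrac{m}{m-1}\beta\Bigr)^{(m-1)/2}=c_m\beta^{(m-1)/2}.
\]
Summing $|q_j|^2$ over $j$ gives $ms$, hence the factor $c_mma^2s$.
\item For the $\gamma$ branch, use $1-atq_\ell=q_\ell\bigl(a(1-t)-\zeta_\ell\bigr)$. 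By the centroid identity of Lemma~\ref{lem:critical}, $\sum_\ell|a(1-t)-\zeta_\ell|^2=m\bigl(a^2(1-t)^2+\kappa\bigr)$. Mirroring the endpoint argument, I would bound $\sum_j|q_j|^2\prod_{\ell\ne j}|q_\ell|\,|a(1-t)-\zeta_\ell|$ by the same $c_mms\,\gamma^{(m-1)/2}$.
\end{itemize}

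\emph{Expected obstacle.} The $\gamma$ branch is where the constant is delicate. Naive separate AM--GM on the $|q_\ell|$ and on the $|a(1-t)-\zeta_\ell|$ over $\ell\ne j$ costs $(m/(m-1))^{m-1}=c_m^2$ rather than $c_m$. To recover a single $c_m$, I would first try the following. Write $|q_j|^2\prod_{\ell\ne j}|q_\ell|=|q_j|\,\Pi$ and use $\Pi\le s^{m/2}$ over all $m$ indices, which carries no loss. Apply AM--GM with the $(m-1)$-term loss only to $\prod_{\ell\ne j}|a(1-t)-\zeta_\ell|\le c_m\bigl(a^2(1-t)^2+\kappa\bigr)^{(m-1)/2}$. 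The leftover $\sum_j|q_j|\,s^{1/2}$ then has to be compared with $ms$; Cauchy--Schwarz gives $\sum_j|q_j|\le m\sqrt s$, which closes the estimate. Taking the minimum of the two pointwise bounds inside the integral gives \eqref{eq:remainder}.
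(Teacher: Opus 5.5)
Your proposal is correct and follows the paper's proof essentially step for step. This includes the resolution you give in your final paragraph for the $\gamma$ branch, which is exactly the paper's argument: write $|q_j|^2\prod_{\ell\ne j}|q_\ell|=|q_j|\,\Pi$, use $\Pi\le s^{m/2}$ with no loss, apply the $(m-1)$-term AM--GM only to $\prod_{\ell\ne j}|a(1-t)-\zeta_\ell|$, and close with $\sum_j|q_j|\le m\sqrt s$. That argument should replace the tentative ``mirroring the endpoint argument'' in your bullet.
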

\begin{proof}
Recall that $F(t)=\smash{p'(a(1-t))/p'(a)}$. Hence, since $p(a)=0$, integration on $[0,1]$ gives
\begin{equation}\label{eq:origin}
 p(0) = -ap'(a)\int_0^1 F(t)\dd.
\end{equation}
A direct computation shows that the derivative of $F$ is
\(
F'(t) = -a\sum_{j} q_j \prod_{\ell\neq j}(1-atq_\ell).
\)
For each $j$, write $q_j=q_j(1-atq_j)+atq_j^2$ inside the derivative of $F$. After summing, this gives the identity
\[
 F'(t) = -ma\mu F(t)-a^2t\sum_jq_j^2 \prod_{\ell\ne j}(1-atq_\ell).
\]
Integrating and using $F(0)=1$ yields $1 = F(1)+ma\mu\int_0^1F\dd+R$. Hence, Equation~\eqref{eq:origin} and the identity $|p'(a)|=n/\Pi$ from Lemma~\ref{lem:products} prove \eqref{eq:direct}.

For the first remainder bound, apply the AM--GM inequality to obtain
\[
 \prod_{\ell\ne j}|1-atq_\ell| \le \Bigg(\frac{\sum_{\ell\ne j}|1-atq_\ell|^2}{m-1}\Bigg)^{\!\!\frac{m-1}{2}} \!\!\le \Bigg( \frac{m}{m-1}\cdot\frac{\sum_{\ell}|1-atq_\ell|^2}{m}\Bigg)^{\!\!\frac{m-1}{2}} \!\!= c_m\beta(t)^{\frac{m-1}{2}}.
\]
Summing with weights $|q_j|^2$ gives a bound
$\smash{c_mms\beta(t)^{\frac{m-1}{2}}}$ for the absolute sum in $R$. For the second part of \eqref{eq:remainder}, factor the same summands as
\[
 q_j^2\prod_{\ell\ne j}(1-atq_\ell) = q_j \cdot \prod_\ell q_\ell \cdot \prod_{\ell\ne j}(a(1-t)-\zeta_\ell).
\]
The centroid identity of Lemma~\ref{lem:critical} gives, after a direct computation, the identity
\[
 \sum_{\ell=1}^m|a(1-t)-\zeta_\ell|^2 = m\bigl(a^2(1-t)^2+\kappa\bigr).
\]
Hence, applying AM--GM and then the bound $\Pi\le s^{m/2}$ from Lemma~\ref{lem:products}, we obtain
\begin{align*}
    |q_j|^2\prod_{\ell\ne j}|1-atq_\ell| \leq |q_j| \Pi\bigg(\frac{m}{m-1}\cdot \frac{\sum_{\ell}|a(1-t)-\zeta_\ell|^2}{m}\bigg)^{\!\!\frac{m-1}{2}} \!\!\leq c_m|q_j|\sqrt{s}\,\gamma(t)^{\frac{m-1}{2}}.
\end{align*}
Since $\sum_j|q_j|\le m\sqrt{s}$ by Cauchy--Schwarz, the absolute sum in $R$ is also bounded by $c_mms\gamma(t)^{(m-1)/2}$. Both estimates hold for every $t$, so taking their minimum before integration proves \eqref{eq:remainder}.

Lastly, AM--GM gives $|F(1)|\le\beta(1)^{m/2}$. Alternatively, we have $1-aq_j=-\zeta_jq_j$ which yields
\[
 |F(1)|=\Pi\prod_j|\zeta_j|
 \le s^{m/2}\kappa^{m/2},
\]
where we used Lemma~\ref{lem:products} and one more application of AM--GM.
\end{proof}

\subsection{The centered estimate}
The direct remainder established in the previous subsection does not capture how close the reciprocal points are to one another. To retain that information, define
\begin{equation}\label{eq:defw}
    D(t) := 1-at\mu,\qquad \omega_j:=a(\mu-q_j),
\end{equation}
so that $1-atq_j=D(t)+t\omega_j$. Then $D^m(t)$ is the product obtained from $F(t)$ by replacing each $q_j$ with its mean. The first-order error vanishes because $\sum_j\omega_j=0$, resulting in a second order estimate in terms of $v$. This is the content of the following lemma. To better state the result, we define the quantities
\begin{equation}\label{eq:defEC}
    E_c:=na|\mu|\int_0^1|F(t)-D^m(t)|\dd, \qquad c_{m,2}:=\left(\frac m{m-2}\right)^{\!\frac{m-2}{2}}.
\end{equation}

\begin{lemma}\label{lem:centered}
Under \eqref{eq:normal} and \eqref{eq:counter}, we have
\begin{equation}\label{eq:centered}
 1\le |\mu|\Pi|p(0)|+|D(1)|^n+E_c.
\end{equation}
Moreover, the integrand in the definition of $E_c$ satisfies
\begin{equation}
    |F(t)-D^m(t)| \leq \frac{ma^2t^2v}{2}\beta(t)^{\frac{m-2}{2}} \min\!\bigg\{\frac{c_m \sqrt{\beta(t)}}{|D(t)|},\, c_{m,2}(m-1) \bigg\},\label{eq:interp1}
\end{equation}
the first entry being omitted if $D(t)=0$.
\end{lemma}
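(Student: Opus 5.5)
The proof has two parts. First I derive \eqref{eq:centered} from an exact identity comparing $D^n$ with the integral of $F$. Then I bound $F-D^m$ by interpolating between $D^m$ and $F$, using the centering $\sum_j\omega_j=0$ to kill the first-order term. Both entries of the minimum in \eqref{eq:interp1} come from AM--GM on the products of the shifted factors, for which a mean-square bound is already available.

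\textbf{Step 1 (the identity).} Since $D(t)=1-at\mu$, we have $\frac{d}{dt}D(t)^n=-na\mu D(t)^m$. Integrating over $[0,1]$ with $D(0)=1$ gives
\[
1=D(1)^n+na\mu\int_0^1 D^m\dd=D(1)^n+na\mu\int_0^1F\dd-na\mu\int_0^1(F-D^m)\dd .
\]
By \eqref{eq:origin}, $\int_0^1F\dd=-p(0)/(ap'(a))$. Together with $|p'(a)|=n/\Pi$ from Lemma~\ref{lem:products}, this gives $|na\mu\int_0^1F\dd|=|\mu|\Pi|p(0)|$. The last term has modulus at most $E_c$ by \eqref{eq:defEC}, so the triangle inequality yields \eqref{eq:centered}.

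\textbf{Step 2 (interpolation).} Fix $t$ and set $G(u):=\prod_j(D+ut\omega_j)$ for $u\in[0,1]$, so that $G(0)=D^m$ and $G(1)=F(t)$. Write $P_j(u):=\prod_{\ell\ne j}(D+ut\omega_\ell)$. Two facts control the size of all these products:
\[
\E|D+ut\omega|^2=|D|^2+u^2t^2a^2v\le|D|^2+t^2a^2v=\beta(t),\qquad \sum_j|\omega_j|^2=ma^2v .
\]
The first holds because $\E\omega=0$, and its right-hand side is exactly $\E|1-atq|^2=\beta(t)$.

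\emph{First entry.} Here $G'(u)=t\sum_j\omega_jP_j$. Because $\sum_j\omega_j=0$, I may subtract $(G/D)\,t\sum_j\omega_j=0$. Using $P_j-G/D=-ut\omega_jP_j/D$, this gives
\[
G'(u)=-\frac{ut^2}{D}\sum_j\omega_j^2P_j(u).
\]
AM--GM on the $m-1$ factors of $P_j$, as in the proof of \eqref{eq:remainder}, gives $|P_j|\le c_m\beta^{(m-1)/2}$. Integrating $u$ from $0$ to $1$ contributes the factor $\tfrac12$, and this yields the first entry $\frac{ma^2t^2v}{2}\beta^{\frac{m-2}{2}}\,c_m\sqrt\beta/|D|$.

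\emph{Second entry.} This needs no division by $D$. Since $G'(0)=D^{m-1}t\sum_j\omega_j=0$, Taylor's formula gives $F-D^m=\int_0^1(1-u)G''(u)\,du$, where
\[
G''(u)=t^2\sum_{j\ne k}\omega_j\omega_k\prod_{\ell\ne j,k}(D+ut\omega_\ell).
\]
AM--GM on the $m-2$ factors, with mean square at most $\frac{m}{m-2}\beta$, bounds each product by $c_{m,2}\beta^{(m-2)/2}$. Moreover,
\[
\sum_{j\ne k}|\omega_j||\omega_k|=\Bigl(\sum_j|\omega_j|\Bigr)^2-\sum_j|\omega_j|^2\le(m-1)\sum_j|\omega_j|^2=(m-1)ma^2v ,
\]
by Cauchy--Schwarz. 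With $\int_0^1(1-u)\,du=\tfrac12$ this gives the second entry. Taking the better of the two bounds proves \eqref{eq:interp1}.

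\textbf{Main obstacle.} The delicate step is obtaining the first entry with the exact constant $c_m$ and a single factor $1/|D|$. The naive second-order expansion only produces the $c_{m,2}(m-1)$ bound. The trick that fixes this is to rewrite $G'$ using $\sum_j\omega_j=0$ so that $\omega_j^2P_j/D$ appears, and I expect this to be the crux of the argument.
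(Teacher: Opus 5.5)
Your proposal is correct and follows the paper's proof essentially step for step. It uses the same identity from integrating $(D^n)'$ combined with \eqref{eq:origin} and $|p'(a)|=n/\Pi$, and the same interpolation $\prod_j(D+ut\omega_j)$, where the centering $\sum_j\omega_j=0$ yields the $\omega_j^2/D$ form for the first entry; the second entry likewise comes from the same second-order Taylor remainder, bounded by AM--GM with two factors omitted and by Cauchy--Schwarz.
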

\begin{proof}
Integrating the derivative of $D^n(t)$ gives $1-D^n(1)=na\mu\int_0^1 D^m(t)\dd$. Moreover, Equation~\eqref{eq:origin} and the identity $p'(a)=n/\prod_jq_j$ give
\[
p(0) \prod_jq_j= -an\int_0^1 F(t) \dd.
\]
Multiplying the latter by $\mu$ and adding it to the former identity thus gives 
\[
 1+p(0)\mu\prod_jq_j-D^n(1)
       =na\mu\int_0^1\bigl(D^m(t)-F(t)\bigr)\dd.
\]
Taking absolute values and using the triangle inequality proves \eqref{eq:centered}.

Now, interpolate between $F$ and $D$ by writing $G_\theta(t)=\prod_j(D(t)+t\theta\omega_j)$, so that $G_0=D^m$ and $G_1=F$. Moreover, note that $\sum_j\omega_j=0$ and $\sum_j|\omega_j|^2=ma^2v$ by \eqref{eq:moments}. In particular,
\begin{equation}\label{eq:interpmoment}
\E\bigl(|D(t)+t\theta\omega|^2\bigr) = |D(t)|^2+a^2t^2\theta^2v \le |D(t)|^2+a^2t^2v = \beta(t).
\end{equation}
Differentiating $G_\theta$ relative to $\theta$ and multiplying by $D(t)$ yields
\[
D(t) \partial_\theta G_\theta(t) = t\sum_j \omega_jD(t)\prod_{\ell\neq j} (D(t)+t\theta\omega_\ell).
\]
Replacing $D(t)$ by
$(D(t)+t\theta\omega_j)-t\theta\omega_j$ in the $j$th term, and using $\sum_j\omega_j=0$ then gives
\[
D(t)\partial_\theta G_\theta(t) = -t^2\theta\sum_j\omega_j^2 \prod_{\ell\ne j}(D(t)+t\theta\omega_\ell).
\]
The same omitted-factor AM--GM estimate used in Lemma~\ref{lem:direct}, now with \eqref{eq:interpmoment}, bounds each product by $c_m\beta^{(m-1)/2}$. If $D(t)\ne0$, integration in $\theta$ therefore proves the first term in \eqref{eq:interp1}:
\[
|F(t)-D^m(t)|\le\frac{t^2}{|D(t)|} \bigg(\sum_j|\omega_j|^2\bigg)c_m\beta(t)^{\frac{m-1}{2}} \int_0^1\theta\,\mathrm{d}\theta = \frac{c_mma^2t^2v}{2|D(t)|}\beta(t)^{\frac{m-1}{2}}.
\]

For an estimate valid even when $D=0$, note that $\partial_\theta G_0(t) = tD^{m-1}(t)\sum_j\omega_j=0$ and apply Taylor's formula with integral remainder
\[
F(t)-D^m(t) = G_1(t)-G_0(t)-\partial_\theta G_0(t) = \int_0^1 (1-\theta)\partial_\theta^2G_\theta(t)\,\mathrm{d}\theta.
\]
The second derivative is $t^2\sum_{j\ne\ell}\omega_j\omega_\ell \prod_{k\ne j,\ell}(D+t\theta\omega_k)$. Omitting two factors in AM--GM bounds each product by $c_{m,2}\beta^{(m-2)/2}$, while Cauchy--Schwarz gives
\[
\sum_{j\ne\ell}|\omega_j\omega_\ell|
=\bigg(\sum_j|\omega_j|\bigg)^{\!2} \!-\sum_j|\omega_j|^2
\le(m-1)\sum_j|\omega_j|^2=m(m-1)a^2v.
\]
Since $\int_0^1(1-\theta)\,\mathrm{d}\theta=1/2$, this proves the second estimate in \eqref{eq:interp1}.
\end{proof}

\section{The analytic proof for degrees at least 2048}\label{sec:large}

We now combine the necessary conditions to rule out a counterexample in every sufficiently large degree. There are three regimes. If the reciprocal second moment $s$ is bounded away from one, the two bounds in Lemma~\ref{lem:direct} control the two opposite ends of the integral. If the prescribed zero is small, the same lemma makes its reciprocal mean $\mu$ small as well, and the direct identity again suffices. In the remaining case we use Lemma~\ref{lem:centered}: its error is proportional to $v$, the variance of the $q_j$. As we shall see, this error can be absorbed by a simultaneous reduction in the mean $\mu$ to ensure that this term is controlled as well.

\begin{theorem}\label{thm:large}
Theorem~\ref{thm:main} holds for every $n\ge2048$.
\end{theorem}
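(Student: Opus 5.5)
We argue by contradiction. Fix $n\ge2048$ and a counterexample normalized as in \eqref{eq:normal} and \eqref{eq:counter}, with $m=n-1$. By Lemma~\ref{lem:products} we have $a^2>(2\log n-1)/n$ and $(1+a^2)s\ge \frac mn n^{2/m}$, and always $\Pi\le s^{m/2}$, $|p(0)|\le\min(1,\sqrt e\,a)$, $|\mu|\le\sqrt s$. The aim is to show that the right-hand side of \eqref{eq:direct} or of \eqref{eq:centered} is strictly less than $1$. We split into three regimes, governed by a threshold $s_0<1$ (something like $1-\epsilon$ with $\epsilon$ a small absolute constant, or $1-C\log n/n$) and a threshold $a_0$ on $a$.

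\emph{Regime 1: $s\le s_0$.} Here the product term in \eqref{eq:direct} is at most $\sqrt s\,s^{m/2}$, which is exponentially small. For the endpoint term, \eqref{eq:Fend} gives $|F(1)|\le(\kappa s)^{m/2}\le s^{m/2}$, again tiny. For $R$ we use \eqref{eq:remainder}. Near $t=0$ we use $\beta(t)\le 1-a^2h^2t$ from \eqref{eq:betachord}, so that
\[
\int_0^{t_1} t\,\beta^{(m-1)/2}\le \frac{4}{(m-1)^2a^4h^4}.
\]
Multiplied by $c_m m a^2 s\le\sqrt e\,m a^2$, this is $O(1/(m a^2h^4))$, and it is small because $ma^2\gtrsim 2\log n$ and $h$ is bounded below. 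Near $t=1$ we use $\gamma(t)\le s(a^2(1-t)^2+1)$. Where $a(1-t)$ is small, $\gamma<1$ uniformly since $s\le s_0$, which gives exponential decay. The middle range is covered by the chord bound $\beta\le1-a^2st(1-t)$. Large $a$ (up to $\sqrt m$) only helps in the $\beta$ bounds. The thing to check is that $a^2 h^4 m$ is large; this is plausibly where the constant $2048$ enters, via $2\log n-1$.

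\emph{Regime 2: $s>s_0$ and $a$ small.} Lemma~\ref{lem:deficit}\ref{lem:deficit6} gives $|\mu|\le a(1+\sqrt{1-s})$, which is small, while $(1+a^2)s\ge\frac mn n^{2/m}$ forces $s$ extremely close to $1$. Then \ref{lem:deficit8} gives $|a-\overline\mu|^2\le(1-s)a^2$, so $\mu\approx a$ and $\beta(t)\approx(1-a^2t)^2+a^2t^2 v$. We again use \eqref{eq:direct}. The product term is at most $\sqrt e\,a\,|\mu|$, which is $O(a^2)$. For $R$, the $\beta$-bound gives roughly $\int_0^1 t\,e^{-(m-1)a^2 t}\,dt\cdot ma^2$, which is about $1/(ma^2)$, hence small if $ma^2\gg1$. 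The endpoint term is $\beta(1)^{m/2}\le(1-a^2h^2)^{m/2}$. This requires $a$ neither too small (guaranteed by Lemma~\ref{lem:products}) nor so large that the $O(a^2)$ product term matters.

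\emph{Regime 3: $s>s_0$ and $a$ not small.} We use \eqref{eq:centered}. Write $|1-a\mu|^2\le v$ from \ref{lem:deficit7}; this gives $|D(1)|^n\le|1-a\mu|^n$ and controls $|D(t)|$. If $v$ is tiny, $E_c$ is bounded via \eqref{eq:interp1} by roughly $n a|\mu|\cdot m a^2 v\int t^2\beta^{(m-2)/2}$, which is $O(a|\mu| v\,n/(m a^4))$. If $v$ is larger, then $|\mu|^2=s-v$ decreases, and, using $|\mu|\le a(1+\sqrt{1-s})$, we get a better decay rate in $\beta$, since $\beta(t)=|D(t)|^2+a^2t^2v$ with $|D|$ bounded below along the segment. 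The product term $|\mu|\Pi|p(0)|$ is at most $s^{(m+1)/2}$ times a factor at most $1$, and we will need it strictly below $1$ with margin coming from $\Pi<1$ and AM--GM slack.

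\textbf{Main obstacle.} We expect Regime~3 to be the hard case: balancing $E_c$ against the reduction in $|\mu|$ as $v$ grows, and making all three pieces sum to less than $1$ uniformly in $a\in[a_0,\sqrt m]$. We would first try to bound the $t$-integral of $t^2\beta^{(m-2)/2}/|D|$ by a Laplace-type estimate around $t=0$. Explicit numerics at $n=2048$ would then fix the thresholds $s_0$ and $a_0$.
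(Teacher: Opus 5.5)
\textbf{Verdict: genuine gap in Regime~3.} Your three regimes match the paper's three cases: $s\le3/4$ and $a\le1/4$ through \eqref{eq:direct}, the rest through \eqref{eq:centered}. Regime~2 is essentially the paper's Case~2. The gap is in Regime~3. You name the right trade-off in your last paragraph, but the mechanism you propose does not deliver it.

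The available bound on $|D(1)|^n+E_c$ is linear in $v$. So you must show that the product term in \eqref{eq:centered} is at most $1-cv$, with $c$ exceeding that constant. Bounding $|\mu|\Pi|p(0)|$ by $s^{(m+1)/2}$ discards exactly such a margin. The slack from $\Pi<1$ and from AM--GM is not bounded below by a multiple of $v$. Near the boundary configurations where every $|q_j|=1$, one has $s\to1$, $\Pi\to1$, $\mu\to a$ and $a^2\to1-v$ with $v=\kappa>0$. There the bound $|p(0)|\le a((n-a^2)/m)^{m/2}=\sqrt{1-v}\,(1+v/m)^{m/2}$ falls short of $1$ only by $O(v^2)$.

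The paper keeps the Cauchy--Schwarz deficit instead. It uses $\Pi|p(0)|\le1$ from Lemma~\ref{lem:products} and $|\mu|=\sqrt{s-v}\le\sqrt{1-v}\le1-v/2$. Then \eqref{eq:centered} is contradicted as soon as $(|D(1)|^n+E_c)/v<1/2$ uniformly. It gets $5/16$ from two bounds:
\begin{itemize}
\item $|D(1)|^n=|D(1)|^2(\beta(1)-a^2v)^{(n-2)/2}\le v\beta(1)^{(n-2)/2}<v/32$;
\item $E_c/v<1/4+1/32$.
\end{itemize}
The case $v=0$ is treated separately. Your claim that a smaller $|\mu|$ improves the decay of $\beta$ points the wrong way. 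Since $\beta(t)=1-2a\re(\mu)t+a^2st^2$, the decay comes from $\re(\mu)\ge ah$ in Lemma~\ref{lem:deficit}\ref{lem:deficit5}, independently of $v$.

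\textbf{Two further steps fail as written.}

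In Regime~3, $|D(t)|$ is not bounded below on the whole segment, because $|D(1)|=|1-a\mu|\le\sqrt{\kappa v}$ may vanish. The paper uses the first entry of \eqref{eq:interp1} only where $|D|\ge1/2$ is proved: all of $[0,1]$ if $1/4<a\le1/2$, but only $[0,1/4]$ if $a>1/2$. For $a>1/2$ on $[1/4,1]$ it uses the $D$-free second entry, whose extra factor $m-1$ is absorbed by $\beta\le1-3/64$ there.

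In Regime~1, $h$ is not bounded below. Together with $a^2\le m$, \eqref{eq:productnecessary} allows $s$ of order $1/n$ with $a^2s\in[1-s,4]$. Then $ma^2h^4$ is of order $ms^3$, so your estimate near $t=0$ fails. Use instead the chord bound $\beta(t)\le1-a^2st(1-t)$ with $a^2s\ge1-s\ge1/4$ on $[0,3/4]$. On $[3/4,1]$, the upper bound $(1+a^2)s\le2(1+\sqrt{1-s})$ gives $\gamma\le57/64$ uniformly, so large $a$ causes no trouble.

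Finally, at $n=2048$ only $ma^2>14$ is available. So in Regime~2 the remainder is not small (the paper's bounds give about $3/4$), and the constants must be tracked to reach the paper's total $2437/2688<1$.
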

\begin{proof}
Assume a counterexample under the hypothesis \eqref{eq:normal} and \eqref{eq:counter} and define $M:=ma^2$, the degree-adjusted size of the prescribed zero. The lower bound on $a^2$ in Lemma~\ref{lem:products} gives
\begin{equation}\label{eq:M14}
 M > \frac{n-1}{n}\bigl(2\log (n)-1\bigr) > 14.
\end{equation}
For the last comparison, note that $\log(2) = 2\sum_{j\;\text{odd}}\frac{1}{j\cdot3^j} > 2(\frac{1}{3}+\frac{1}{81}) = \frac{56}{81}$. The derivative of $(1-1/n)(2\log(n)-1)$ is $(2n+2\log(n)-3)/n^2>0$ for $n\ge2048$, and at $n=2048$ the expression exceeds $(2047/2048)(22\cdot56/81-1)>14$. We now divide into the three ranges described above.

\subsection*{Case 1: \texorpdfstring{$\bm{s\le3/4}$}{s ≤ ¾}}
In this range, \eqref{eq:productnecessary} gives $a^2s \ge 1-s \ge 1/4$. For $0\le t\le3/4$, the second bound of Lemma~\ref{lem:beta} gives $\beta(t)\le1-a^2st/4\le e^{-a^2st/4}$. Moreover, we have $c_m = (1+\frac{1}{m-1})^{(m-1)/2}<\sqrt{e}<5/3$, which follows from
$(1+1/x)^{x}<e$ and the elementary rational estimate
\[
 e = \sum_{j=0}^\infty\frac1{j!} \le \sum_{j=0}^4\frac1{j!}+\frac{1}{5!}\sum_{j=0}^\infty\frac{1}{6^j} = \frac{1631}{600}<\frac{25}{9}.
\]
Here the tail was bounded by a geometric series using successive ratios at most $1/6$. The contribution on the interval $[0,3/4]$ to the bound
for $|R|$ in Lemma~\ref{lem:direct} is therefore at most
\begin{align}
 c_mma^2s\int_0^\infty t e^{-(m-1)a^2st/8}\dd &= \frac{64c_mm}{(m-1)^2a^2s} < \frac{1280m}{3(m-1)^2} < \frac{432}{m}<\frac14.\label{eq:case1early}
\end{align}

For $3/4\le t\le1$, we instead work with the $\gamma$ bound. Lemma~\ref{lem:critical} gives $\kappa<1$, and \eqref{eq:productnecessary} gives
\[
 \gamma(t)\le s+\frac{a^2s}{16} = \frac{15s}{16} + \frac{(1+a^2)s}{16}
 \le \frac{15s}{16} +\frac{2(1+\sqrt{1-s})}{16}\le\frac{57}{64}.
\]
The last function has derivative $\frac{15}{16}-\frac{1}{16\sqrt{1-s}}>0$ on $[0,3/4]$, so its maximum is $57/64$ at $s=3/4$. Using $t\le1$, $a^2s\le (1+a^2)s \leq 2+2\sqrt{1-s}\leq 4$, and the interval length $1/4$, the remaining contribution is thus at most
\begin{equation}\label{eq:case1tail}
 \frac53m\!\left(\frac{57}{64}\right)^{\!(m-1)/2} \!\le 2n e^{-7(n-2)/128} < \frac1{16}.
\end{equation}
To verify the last inequality for all $n\ge2048$, note that the logarithmic derivative of $n e^{-7(n-2)/128}$ is $1/n-7/128<0$. Also, at $n=2048$, its exponent satisfies $7\cdot2046/128>17$. Hence, using $e>2$, we find $e^{7(2046)/128} > 2^{17} > 32\cdot2048$.

For the other two terms of \eqref{eq:direct}, Lemma~\ref{lem:direct} gives $|F(1)|\le s^{m/2}$ while Lemma~\ref{lem:products} and the inequality $|\mu|\le\sqrt{s}$ from \eqref{eq:CS} give
\[
 \frac mn|\mu|\Pi|p(0)|\le s^{\frac{m+1}{2}}.
\]
Both are at most $(3/4)^{m/2}\le e^{-m/8}<e^{-4}<1/16$. Combining these estimates with \eqref{eq:case1early} and \eqref{eq:case1tail}, we find that the right-hand side of \eqref{eq:direct} is less than $1/4+3/16=7/16$, a contradiction.

\subsection*{Case 2: \texorpdfstring{$\bm{a\le1/4}$}{a ≤ ¼}}

By \eqref{eq:productnecessary}, $s\ge(1+a^2)^{-1}\ge16/17$. Thus $\sqrt{1-s}\le1/\sqrt{17}<1/4$, $h>3/4$, and inequality \ref{lem:deficit6} in Lemma~\ref{lem:deficit} gives $|\mu|\le5a/4$. Moreover, Lemma~\ref{lem:beta} gives $\beta(1)\le1-9a^2/16$. Using $\Pi\le1$ and $|p(0)|\le\sqrt{e}\,a<(5/3)a$ from Lemma~\ref{lem:products}, the middle term of \eqref{eq:direct} is at most
\begin{equation}\label{eq:case2main}
 |\mu||p(0)|\le\frac{25}{12}a^2\le\frac{25}{192}.
\end{equation}

For $0\le t\le1/2$, the definition of $\beta$ and Lemma~\ref{lem:deficit} give $\beta(t)\le1-3a^2t/2+a^2t^2\le1-a^2t \leq e^{-a^2t}$. The first part of the direct remainder in \eqref{eq:direct} is consequently at most
\begin{equation}\label{eq:case2early}
 c_mma^2\int_0^\infty te^{-(m-1)a^2t/2}\dd < \frac{20}{3}\frac{m^2}{(m-1)^2}\frac1M < \frac{27}{4M} < \frac{27}{56}.
\end{equation}
Here we used $s<1$, $c_m<5/3$, $m^2/(m-1)^2<101/100$, $(20/3)(101/100)<27/4$, and \eqref{eq:M14}.

At both $t=1/2$ and $t=1$ we have $\beta(t)\le1-a^2/2$. Hence, the convexity of $\beta$, as in the proof of Lemma~\ref{lem:beta}, extends this bound to $[1/2,1]$. Since $\int_{1/2}^1t\dd=3/8$, its contribution is at most
\begin{equation}\label{eq:case2tail}
 \frac58M e^{-(m-1)M/(4m)} \le \frac58M e^{-1023M/4094} < \frac{35}{4}e^{-7161/2047} < \frac{35}{128}.
\end{equation}
Indeed $(m-1)/(4m)\ge2046/(4\cdot2047)$; the function $M e^{-1023M/4094}$ decreases for $M>4094/1023$, so \eqref{eq:M14} allows evaluation at $M=14$. The last comparison follows entirely in rational arithmetic from
\[
 e^{7161/2047}>\sum_{j=0}^{7}\frac{(7161/2047)^j}{j!}>32.
\]
Finally, \eqref{eq:Fend} and the endpoint bound $\beta(1) \leq 1-9a^2/16 \leq e^{-9a^2/16}$ give
\begin{equation}\label{eq:case2end}
 |F(1)|\le \beta(1)^{m/2} \leq e^{-9M/32}<e^{-63/16}<\frac1{48},
\end{equation}
where $\sum_{j=0}^{12}(63/16)^j/j!>48$ proves the last step. Adding \eqref{eq:case2main}--\eqref{eq:case2end} gives
\[
 1\le |F(1)|+\frac mn|\mu|\Pi|p(0)|+|R| \leq \frac{25}{192}+\frac{27}{56}+\frac{35}{128}+\frac1{48}
 =\frac{2437}{2688}<1,
\]
contradicting \eqref{eq:direct}.

\subsection*{Case 3: \texorpdfstring{$\bm{s>3/4}$ and $\bm{a>1/4}$}{s > ¾ and a > ¼}}
The remaining range is treated by the centered identity. Here $h>1/2$; the bounds $ah\le|\mu|\le\sqrt{s}<1$ in Lemma~\ref{lem:deficit} and \eqref{eq:CS} imply $a<2$, and inequality \ref{lem:deficit6} in Lemma~\ref{lem:deficit} gives $|\mu|<3a/2$. Lemma~\ref{lem:beta} gives $\beta(1)\le1-a^2/4 \leq e^{-a^2/4}$. Therefore, the identity $\beta(1)=|D(1)|^2+a^2v$ and the covariance bound
\ref{lem:deficit7} in Lemma~\ref{lem:deficit} imply, for $v>0$,
\begin{equation}\label{eq:centerend}
 |D(1)|^n = |D(1)|^2\bigl(\beta(1)-a^2v \bigr)^{\frac{n-2}{2}} \le v\beta(1)^{\frac{n-2}{2}}
 \le v e^{-\frac{(n-2)a^2}{8}}
 \le v e^{-\frac{n-2}{128}}<\frac v{32}.
\end{equation}
For the final comparison, we used $(2048-2)/128>6$ and $e^6>2^6>32$. We first assume that $v>0$ and we estimate $E_c/v$. To this end, we further consider two more subcases.

\smallskip\noindent\emph{Subcase $1/4<a\le1/2$.}
The first bound in \eqref{eq:productnecessary} gives $1-s\le \frac{a^2}{1+a^2}\le a^2$, so $h\ge1-a$. Hence,
\[
 a^2h^3\ge a^2(1-a)^3\ge\frac{27}{1024}.
\]
Indeed, $a^2(1-a)^3$ is concave on $[1/4,1/2]$, so its minimum is at an endpoint, and the smaller endpoint value is $27/1024$. Now, for $t\in[0,1/4]$, $|D(t)| \ge 1-a|\mu|/4 \ge 7/8$. Moreover, $2t\le 1/2 \leq 1-a \leq  h$, so $\beta(t)\le1-2a^2ht+a^2t^2\le1-3a^2ht/2 \leq e^{-3a^2ht/2}$. Use the first term in \eqref{eq:interp1} along with the identity $\int_0^\infty t^2e^{-ct}\dd=2/c^3$ and the bounds $c_m< 5/3$ and $|\mu| \leq a(1+\sqrt{1-s})\leq 3a/2$ to obtain
\begin{align}
 \frac{E_{c,[0,1/4]}}v
 &\le \frac{n|\mu|c_mma^3}{2} \cdot \frac87 \cdot \frac{2}{(3(m-1)a^2h/4)^3} = \frac{512c_m nm|\mu|/a}{189(m-1)^3\,a^2h^3} < \frac{300}{m} < \frac14.\label{eq:case3early}
\end{align}
Here $E_{c,I}$ denotes the integral defining $E_c$ restricted to $I$. For completeness, we note that the function $(m+1)m^2/(m-1)^3$ decreases for $m>1$, since its logarithmic derivative is $-(4m+2)/(m(m-1)(m+1))<0$. Its value at $2047$ is less than $101/100$. Thus the numerical coefficient is bounded by
\[
 \frac{512}{189}\frac53\frac32\frac{101}{100}\frac{1024}{27}<300,
 \qquad \frac{300}{2047}<\frac14.
\]

To control the remaining interval, observe that the same bounds give
\begin{align*}
 1-\beta(1/4)&\ge a^2(h/2-1/16)
     \ge a^2(7/16-a/2)\ge5/256,\\
 1-\beta(1)&\ge a^2h^2
     \ge a^2(1-a)^2\ge9/256.
\end{align*}
Convexity of $\beta$ therefore gives $\beta(t) \le 1-5/256$ on $[1/4,1]$. Moreover, throughout this interval we have $|D(t)|\ge1-a|\mu|\ge1/2$. Since $c_m/2<1$, the first term in \eqref{eq:interp1} yields
\begin{equation}\label{eq:case3tail}
 \frac{E_{c,[1/4,1]}}v \le \frac3{16}n^2e^{-5(n-2)/512}<\frac1{32}.
\end{equation}
Indeed the prefactor is at most $2nm a^3|\mu|\le3nm a^4\le3n^2/16$, and $\int_{1/4}^1t^2\dd\le1$. The function $n^2e^{-5(n-2)/512}$ decreases for $n>1024/5$. At $n=2048$, the exponent exceeds $19$, and
\[
 e^{5(2046)/512}>(5/2)^{19}>6\cdot2048^2,
\]
using $e>1+1+1/2=5/2$. This proves the stated comparison.

\smallskip\noindent\emph{Subcase $a>1/2$.}
We still have $a<2$, $|\mu|<1$, and $h>1/2$. Therefore, on $[0,1/4]$, we have $|D(t)|\ge1-a|\mu|/4>1/2$ and $\beta(t)\le1-3a^2ht/2 \leq e^{-3a^2ht/2}$. Moreover, we also have $a^2h^3> 1/2^{2+3}= 1/32$. The calculation in \eqref{eq:case3early} now becomes in this case
\begin{equation}\label{eq:case4early}
 \frac{E_{c,[0,1/4]}}v
 \le\frac{128c_m}{27}\frac{nm}{(m-1)^3}
          \frac{|\mu|/a}{a^2h^3}
 <\frac{400}{m}<\frac14.
\end{equation}

At $t=1$, Lemma~\ref{lem:beta} and $h>1/2$ gives $\beta(1)\le1-a^2h^2\le1-a^2/4$, while at $t=1/4$ the same lemma together with $s\le2h$ gives $\beta(1/4)\le1-a^2h/2+a^2s/16\le1-3a^2h/8\le1-3a^2/16$. Hence $\beta(t)\le1-3a^2/16$ at $t=1/4$ and at $t=1$. Convexity therefore gives $\beta\le1-3/64$ on $[1/4,1]$. Here, a lower bound for $|D|$ is unnecessary as we will use the second estimate in \eqref{eq:interp1}. Its constant satisfies $c_{m,2}<e<3$ by the same exponential estimate used in Case~1. Since $a<2$ and $|\mu|<1$, we thus find
\begin{equation}\label{eq:case4tail}
 \frac{E_{c,[1/4,1]}}v
 \le12n^3e^{-3(n-3)/128}<\frac1{32}.
\end{equation}
Indeed, the prefactor is bounded by $na|\mu|\,3m(m-1)a^2/2<12n^3$, and the integral of $t^2$ is at most one. The function $n^3e^{-3(n-3)/128}$ decreases for $n>128$. At $n=2048$, $3(2045)/128>47$ and $e^{3(2045)/128} > 2^{47}>384\cdot2048^3$, as required.

Combining \eqref{eq:centerend} with either \eqref{eq:case3early}--\eqref{eq:case3tail} or \eqref{eq:case4early}--\eqref{eq:case4tail}, depending on the cases, we find that $|D(1)|^n+E_c<5v/16$. Lemma~\ref{lem:products} bounds the other term of the centered inequality by
\[
 |\mu|\Pi|p(0)|\le|\mu|=\sqrt{s-v}
 \le\sqrt{1-v}\le1-v/2.
\]
Hence, putting everything together, Equation~\eqref{eq:centered} would now imply $1<1-3v/16$, a contradiction.

Finally, if $v=0$, all $q_j$ equal $\mu$. Identity~\ref{lem:deficit1} in Lemma~\ref{lem:deficit} then gives $a\mu=1$, hence $D(1)=0$ and $F=D^m$. Lemma~\ref{lem:centered} and the inequality $|p(0)|\le1$ instead give $1\le|\mu|\Pi|p(0)|\le|\mu|^{m+1}<1$, because $|\mu|<1$ by \eqref{eq:counter}. Therefore, this rules out the last possibility and proves the theorem.
\end{proof}

\section{Explicit exclusions in finite degree}\label{sec:finitebounds}

The estimates developed in Section \ref{sec:large} were intentionally crude, since the only requirement there was that they decay sufficiently rapidly with respect to the degree to produce a contradiction when $n$ is large. However, for $n \leq 2047$, a more refined analysis is required. The purpose of the two remaining sections is to retain and exploit additional information. Although the overall approach is elementary, the individual formulas are difficult to motivate when viewed in isolation. We therefore begin by outlining the full strategy. Throughout Sections~\ref{sec:finitebounds}--\ref{sec:computer}, the normalized counterexample assumptions \eqref{eq:normal} and \eqref{eq:counter} remain in force, and we always assume that $n\geq4$.

\subsection{The principle}\label{subsec:principle}

\subsubsection*{Shortcut variables and a three-dimensional search space}

Introduce the shortcut variables
\begin{equation}\label{eq:coords}
 A:=a^2,\qquad r:=|\mu|^2,\qquad v=\E\bigl(|q-\mu|^2\bigr)=s-r,
 \qquad b:=2a\re(\mu),
\end{equation}
the square of the prescribed zero, the squared modulus of the mean and the variance of the reciprocal points $q_j$ from \eqref{eq:moments}, and twice the real part of $a\mu$. In particular, we have $\sqrt{A}=a$, $\sqrt{r}=|\mu|$, $\re (\mu)=b/(2a)$, and $(2\log n-1)/n<A\le m$. We also write $k:=(m-1)/m$, so that $\kappa\le k$ by Lemma~\ref{lem:critical}.

Every quantity produced by the estimates of Sections~\ref{sec:normalization} and~\ref{sec:integrals} is then either an exact function of $A,r,v,b$ and $n$, or is bounded above by one. The first alternative covers the two functions
\begin{equation}\label{eq:betaD}
 \beta(t)=1-bt+Ast^2,\qquad
 D(t)=1-ta\mu,\qquad |D(t)|^2=1-bt+Art^2=\beta(t)-Avt^2
\end{equation}
in Section~\ref{sec:integrals}, and also $\mu$ itself, since $|\mu|^2=r$ and $\re(\mu)=b/(2a)$ determine $\mu$ once the whole root configuration has been conjugated, if necessary, so that $\im(\mu)\ge0$. The second alternative covers the remaining ingredients: the critical-point moment $\kappa$ through $\kappa\le k$ (Lemma~\ref{lem:critical}), the products $\Pi\,|p(0)|$ through Lemma~\ref{lem:products}, the endpoint value $|F(1)|$ through \eqref{eq:Fend}, and the centered coefficients of Subsection~\ref{subsec:centered} through $|e_j(\omega)|\le\alpha_j(Av)^{j/2}$.

Nonetheless, in this section, $b$ will \emph{not} be considered a fourth coordinate. Indeed, Proposition~\ref{prop:restrictions} confines it to an interval determined by $(A,r,v)$ and $n$, and Remark~\ref{rem:boxeval} eliminates it, monotonically for $\beta$ and $|D|$ and by an explicit enclosure for $\mu$. Every bound we write below is therefore, in the end, an explicit function of $(A,r,v)$ and $n$, and a counterexample in degree $n$ thus produces a point of the compact search space
\begin{equation}\label{eq:searchspace}
 \mathcal D_n:=[0,m]\times[0,1]\times[0,1]
 \quad\text{in the coordinates }(A,r,v).
\end{equation}
The whole finite part of the proof consists in showing that no point of $\mathcal D_n$ can come from a counterexample, for each $n$ between $4$ and $2047$.

\subsubsection*{Step 1: restrictions among the parameters}
Lemmas~\ref{lem:critical}--\ref{lem:products} and~\ref{lem:beta} force the coordinates of a counterexample to satisfy a list of explicit inequalities among themselves. These are collected in Proposition~\ref{prop:restrictions}, and they already remove a large portion of $\mathcal D_n$.

\subsubsection*{Step 2: tests}
The two identities of Section~\ref{sec:integrals} obtained in Lemmas~\ref{lem:direct} and \ref{lem:centered}, together with a third one obtained below in Subsection~\ref{subsec:endpoint}, each state that the number $1$ is at most a sum of terms built from the counterexample's configuration. Bounding every such term by an explicit elementary function of $(A,r,v,n)$ turns each identity into an inequality of the shape
\begin{equation*}\label{eq:testshape}
 1\le\Phi(A,r,v,n),
\end{equation*}
where $\Phi$ is a completely explicit expression: finitely many arithmetic operations, square roots, powers, and one integral of an elementary function over $[0,1]$. Three such functions $\Phi$ are produced in Subsections~\ref{subsec:direct}--\ref{subsec:endpoint}. In the following, an inequality of this form is called a \emph{test}. A counterexample must \emph{satisfy} the test, so showing that $\Phi<1$ somewhere proves that no counterexample lives there.

\subsubsection*{Step 3: boxes}
Both the restrictions of Step~1 and the tests of Step~2 involve only finitely many elementary operations. Hence, if $\mathcal B\subset\mathcal D_n$ is a closed box $I_A\times I_r\times I_v$ with rational endpoints, a single finite computation with intervals produces a number $\Phi_{\mathcal B}$ with $\Phi(A,r,v,n)\le\Phi_{\mathcal B}$ for \emph{every} point of $\mathcal B$. The box is then discarded if
\begin{itemize}
\item some restriction of \emph{Step~1} fails at every point of $\mathcal B$, or
\item $\Phi_{\mathcal B}<1$ for one of the three tests.
\end{itemize}
If a finite family of boxes covers $\mathcal D_n$ and every member is discarded, then no counterexample can exist in degree $n$.

\subsubsection*{Step 4: carrying out the computation}
The subdivision and the evaluation of the bounds are done by computer, in Section~\ref{sec:computer}. All arithmetic is performed on intervals with exact dyadic rational endpoints, rounded outwards, so that each computed $\Phi_{\mathcal B}$ is a rigorous upper bound; and the subdivision is recorded as a binary tree, so that the covering of $\mathcal D_n$ is verified by induction rather than assumed. Since the bounds are moreover monotone in $n$, one box can serve a whole block of consecutive degrees (see Subsection~\ref{subsec:blocks}), which is what keeps the range up to $2047$ tractable. One situation escapes this scheme and is treated analytically in Subsection~\ref{sec:boundary}, namely one neighborhood in degree four where all three tests lose their margin at once.

\subsection{Restrictions}\label{subsec:restrictions}

We first proceed with Step~1 by proving the following result. To this end, set
\begin{equation*}
K:=\sqrt{A}\left(\frac{s(n-A)}{m}\right)^{\!m/2}\!\!,\qquad L:=\min\Bigl\{(1-s)\sqrt{mA},\, \sqrt{\smash{(1-s)(A-1/m)}\vphantom{A}}\Bigr\}.
\end{equation*}

\begin{proposition}
\label{prop:restrictions}
Let $n\ge4$ and assume \eqref{eq:normal} and \eqref{eq:counter}. Then the parameters \eqref{eq:coords} satisfy
\vspace{-5pt}
\begin{multicols}{3}
\begin{enumerate}
    \itemsep=3.5pt

    \itemtag{(1)} $0\le v\le s<1$;\label{prop:restrictions1}

    \itemtag{(4)} $|\hspace{-1pt}\im(\mu)|\le L$;\label{prop:restrictions4}
    
    \itemtag{(2)} $ns(1+A)\ge n^{2/m}m$;\label{prop:restrictions2}
    
    \itemtag{(5)} \makebox*{Box}{$\max\bigl\{A+r-L^2,\, 1+Ar -kv\bigr\}
   \le b \le 2\sqrt{Ar}$.\!\!\!\!\!\!\!\!\!\!\!\!\!\!\!\!\!\!\!\!\!\!\!\!\!\!\!\!\!\!\!\!\!\!\!\!\!\!\!\!\!\!\!\!\!\!\!\!\!\!\!\!\!\!\!\!\!\!\!\!\!\!\!\!\!\!\!\!\!\!\!\!\!\!\!\!\!\!\!\!\!\!\!\!\!\!\!\!\!\!\!\!\!\!\!\!\!\!\!\!\!\!\!\!\!}\label{prop:restrictions5}
   
    \itemtag{(3)} $\bigl|\sqrt{A}-\sqrt{r}\bigr|\le L$;\label{prop:restrictions3}

    \item[\,] 
\end{enumerate}
\end{multicols}
\vspace{-10pt}
\end{proposition}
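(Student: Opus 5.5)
The plan is to read each item off the lemmas of Section~\ref{sec:normalization} after rewriting it in the coordinates \eqref{eq:coords}. The algebraic identities that do the work are
\[
 |a-\overline\mu|^2=A-2a\re(\mu)+|\mu|^2=A-b+r,\qquad |1-a\mu|^2=1-b+Ar .
\]
The only step that is not a direct quotation of earlier results is the first entry $(1-s)\sqrt{mA}$ of $L$. I expect that entry to be the main obstacle.

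Item \ref{prop:restrictions1} is immediate from \eqref{eq:moments} and \eqref{eq:counter}: every $|q_j|<1$ forces $s<1$, and $0\le v=s-r\le s$. Item \ref{prop:restrictions2} is the inequality $\frac mn n^{2/m}\le(1+a^2)s$ of \eqref{eq:productnecessary} multiplied by $n$.

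For items \ref{prop:restrictions3}--\ref{prop:restrictions5}, I first establish the single bound $|a-\overline\mu|\le L$. The derived inequalities then follow at once.
\begin{itemize}
\item Since $a\ge0$, we have $|\sqrt A-\sqrt r|=\bigl||a|-|\overline\mu|\bigr|\le|a-\overline\mu|$, which gives \ref{prop:restrictions3}.
\item Since $a$ is real, $|\im(\mu)|=|\im(a-\overline\mu)|\le|a-\overline\mu|$, which gives \ref{prop:restrictions4}.
\item Squaring $|a-\overline\mu|\le L$ and using the first identity gives $b\ge A+r-L^2$.
\item Item \ref{lem:deficit7} of Lemma~\ref{lem:deficit} together with $\kappa\le k$ gives $1-b+Ar\le kv$, that is, $b\ge 1+Ar-kv$.
\item The upper bound $b=2a\re(\mu)\le2a|\mu|=2\sqrt{Ar}$ is trivial.
\end{itemize}

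It remains to prove $|a-\overline\mu|\le L$. The second entry of $L$ is exactly item \ref{lem:deficit8} of Lemma~\ref{lem:deficit}. For the first entry, I would return to the representation $a-\overline\mu=\E(\delta/q)$ with $\delta_j=1-|q_j|^2>0$. That representation is $L^1$-type in $\delta$, so I aim for a bound linear in $\E(\delta)=1-s$ rather than in its square root.

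The route I would try first is the Cauchy--Schwarz split
\[
|\E(\delta/q)|^2\le\E(\delta^2)\,\E(|q|^{-2}),
\]
combined with $\E(\delta^2)\le m\,\E(\delta)^2=m(1-s)^2$. The latter holds because the $\delta_j$ are nonnegative, so $\sum\delta_j^2\le(\sum\delta_j)^2$. This already gives $(1-s)\sqrt{m(A+\kappa)}$. The remaining difficulty is to remove $\kappa$ so as to reach $\sqrt{mA}$.

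For that I would exploit $\E(\zeta)=0$: write $1/q_j=a-\zeta_j$ and subtract a suitable multiple of the centered quantity, for instance replacing $\delta_j$ by $\delta_j-\E(\delta)$ in the $\zeta$-part. The aim is for the $\zeta$-contribution to be absorbed and only the $a$-part $a\,\E(\delta)$ to survive at size $\sqrt m$. If that fails, I would look for a pointwise bound $|a-\zeta_j|^2\le mA$. Such a bound would give $\E(\delta/|q|^2)\le mA\,\E(\delta)$, and hence the claim via the Cauchy--Schwarz factorization already used in Lemma~\ref{lem:deficit}.
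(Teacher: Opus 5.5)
\textbf{The first entry of $L$ is not proved.} Most of your proposal is sound and matches the paper: items (1) and (2), the reduction of (3)--(5) to the single bound $|a-\overline\mu|\le L$, the expansions $|a-\overline\mu|^2=A-b+r$ and $|1-a\mu|^2=1-b+Ar$, and the second entry of $L$ via item (4) of Lemma~\ref{lem:deficit}. The gap is the entry $(1-s)\sqrt{mA}$. Your first route gives only $(1-s)\sqrt{m(A+\kappa)}$. The centering idea, carried out with Cauchy--Schwarz via $\E(\delta/q)=a(1-s)-\E\bigl((\delta-\E\delta)\zeta\bigr)$, gives $(1-s)\bigl(a+\sqrt{(m-1)\kappa}\bigr)$. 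Neither is dominated by $(1-s)\sqrt{mA}$ in general, because the $\kappa$-contribution does not scale with $a$. Your fallback, a pointwise bound $|q_j|^{-2}=|a-\zeta_j|^2\le mA$, is exactly what is needed, and you correctly note that it would finish. But you give no argument for it, and it is the one genuinely new step in this proposition.

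\textbf{The missing idea.} Apply the counterexample hypothesis \eqref{eq:counter} to the other $m-1$ indices individually, rather than using only averaged information. By item (2) of Lemma~\ref{lem:deficit} and Lemma~\ref{lem:critical}, $\sum_\ell|q_\ell|^{-2}=m(A+\kappa)\le mA+m-1$. By \eqref{eq:counter}, $|q_\ell|^{-2}>1$ for every $\ell\ne j$. Subtracting those $m-1$ terms gives $|q_j|^{-2}<mA$ for each $j$. Then $|a-\overline\mu|=|\E(\delta/q)|\le\E(\delta/|q|)\le\sqrt{mA}\,\E(\delta)=(1-s)\sqrt{mA}$. Your Cauchy--Schwarz factorization gives the same result, and this is the paper's argument.

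This also explains why your averaged estimates lose. Equality in $\E(\delta^2)\le m\,\E(\delta)^2$ forces all the deficit onto a single index. Only a pointwise bound on that index's $|q_j|^{-2}$ removes the $\kappa$ term, and that bound exists because every other index contributes more than $1$ to the sum.
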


\begin{proof}
Both \ref{prop:restrictions1} and \ref{prop:restrictions2} have already been proved: they follow directly from Lemmas~\ref{lem:deficit} and~\ref{lem:products}.

For \ref{prop:restrictions3} and \ref{prop:restrictions4}, recall from Lemmas~\ref{lem:deficit} and~\ref{lem:critical} that $\sum_j|q_j|^{-2}=m(A+\kappa)\le mA+m-1$. Since $|q_j|<1$ by \eqref{eq:counter}, we get $|q_j|^{-2}+m-1<\sum_{j}|q_j|^{-2}\le mA+m-1$, so $|q_j|^{-2}\le mA$ for every $j$. The numbers $\delta_j=1-|q_j|^2$ have mean $1-s$, so the identity $a-\overline\mu=\E(\delta/q)$ from the proof of Lemma~\ref{lem:deficit} gives $|a-\overline\mu|\le\E(\delta/|q|)\le(1-s)\sqrt{mA}$. The second member of $L$ is \ref{lem:deficit8} in Lemma~\ref{lem:deficit}. Now $|a-\overline\mu|\le L$ yields \ref{prop:restrictions3} and \ref{prop:restrictions4} by the reverse triangle inequality and by taking imaginary parts.

Lastly, for \ref{prop:restrictions5}, take the squared moduli in $|a-\overline\mu|\le L$ to obtain $A+r-b\le L^2$, i.e., the first lower bound. The second is obtained by expanding \ref{lem:deficit7} in Lemma~\ref{lem:deficit}, which gives $1-b+Ar\le k(s-r) = kv$. The upper bound is simply $\re(\mu)\le|\mu|$.
\end{proof}

\begin{remark}\label{rem:boxeval}
Proposition~\ref{prop:restrictions} allows the tests below, stated in terms of $\beta$ and $D$, to be evaluated on an $(A,r,v)$-box. Indeed, \ref{prop:restrictions5} confines $b$ to $[b_0,2\sqrt{Ar}]$ with $b_0:=\max\{A+r-L^2,\,1+Ar-kv\}$, and $\beta(t)$ and $|D(t)|^2$ decrease in $b$ for $t>0$ by \eqref{eq:betaD}. Together with \eqref{eq:betachord}, this gives, for $0\le t\le1$,
\begin{equation}\label{eq:explicitbounds}
 0\leq \beta(t) \le \min\{1, 1-b_0t+Ast^2\},\qquad
 |D(t)|^2 \le 1-b_0t+Art^2,\qquad
 |D(t)|\ge1-t\sqrt{Ar},
\end{equation}
and $\gamma(t)\le s\bigl(A(1-t)^2+k\bigr)$ by Lemma~\ref{lem:direct}, while $\mu$ is enclosed by $\re(\mu)\in[b_0/(2a),\sqrt r]$ and $|\im(\mu)|\le\min\bigl\{\sqrt{r-\re(\mu)^2},\,L\bigr\}$. On a box, \ref{prop:restrictions1}, \ref{prop:restrictions3} and \ref{prop:restrictions5} also confine $r$: with $v=s-r$, the second lower bound in \ref{prop:restrictions5} against its upper bound reads $(A+k)r-2a\sqrt r+1-ks\le0$, so
\[
\max\{a-L,\,x_-\}\le\sqrt r\le\min\{\sqrt s,\,a+L,\,x_+\},\qquad x_\pm:=\frac{a\pm\sqrt{k((A+k)s-1)}}{A+k},
\]
a range that is empty if $(A+k)s<1$. Both entries of $b_0$ increase with $r$, so $b_0$ may be evaluated at the smallest such $r$.
\end{remark}

\subsection{The direct test}\label{subsec:direct}

Our first test is the direct identity of Lemma~\ref{lem:direct}, with every quantity replaced by its corresponding expression in the coordinates \eqref{eq:coords}.

\begin{proposition}
\label{prop:direct}
Every hypothetical counterexample satisfies
\begin{equation}\label{eq:directtest}
 1 \le \min\bigl\{ks,\, \beta(1)\bigr\}^{\frac{m}{2}}
   +\frac mn K\sqrt{r}
   +c_mmAs\int_0^1 t
     \min\bigl\{\beta(t),\, s\bigl(A(1-t)^2+k\bigr)\bigr\}^{\!\frac{m-1}2}\dd.
\end{equation}
\end{proposition}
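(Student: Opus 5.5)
The plan is to start from the direct inequality \eqref{eq:direct} of Lemma~\ref{lem:direct},
\[
 1\le |F(1)|+\frac mn|\mu|\Pi|p(0)|+|R|,
\]
and bound each of the three terms on the right by the matching term of \eqref{eq:directtest}. Every bound will be expressed through $(A,r,v)$, $s=r+v$ and $n$, with $\beta$ kept as the exact function \eqref{eq:betaD}. The inequality should then follow by monotonicity of $x\mapsto x^{m/2}$ and $x\mapsto x^{(m-1)/2}$ on $[0,\infty)$, with no further input.

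\emph{Endpoint term.} By \eqref{eq:Fend}, $|F(1)|\le\min\{\beta(1),\kappa s\}^{m/2}$. Lemma~\ref{lem:critical} gives $\kappa\le\frac{m-1}{m}=k$, so $\kappa s\le ks$. Since both entries are nonnegative ($\beta\ge0$ by Lemma~\ref{lem:beta}), monotonicity yields the first term $\min\{ks,\beta(1)\}^{m/2}$.

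\emph{Product term.} Here $|\mu|=\sqrt r$ by \eqref{eq:coords}. For the factor $\Pi|p(0)|$, use $\Pi\le s^{m/2}$ from Lemma~\ref{lem:products} together with $|p(0)|\le a\bigl((n-a^2)/m\bigr)^{m/2}$. The product is then at most $\sqrt A\bigl(s(n-A)/m\bigr)^{m/2}=K$. Note that $n-A\ge1>0$ because $A\le m$, so the power is well defined. This gives $\frac mn K\sqrt r$.

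\emph{Remainder term.} Use \eqref{eq:remainder}, in which $ma^2s=mAs$. Replace $\gamma(t)=s(A(1-t)^2+\kappa)$ by the larger quantity $s(A(1-t)^2+k)$, again via $\kappa\le k$. This replacement can only increase the minimum inside the integral, and raising to the power $(m-1)/2\ge0$ preserves the order pointwise in $t$, so the integral bound carries over.

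Summing the three bounds gives \eqref{eq:directtest}. I expect no real obstacle here. The only care needed is to keep $\beta(t)$ exact rather than bounded at this stage: its dependence on the unresolved coordinate $b$ is deferred to Remark~\ref{rem:boxeval}, which justifies replacing $b$ by $b_0$ later on. One should also check that all quantities raised to fractional powers are nonnegative.
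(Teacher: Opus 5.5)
Your proposal is correct and matches the paper's proof, which consists of substituting $|\mu|=\sqrt r$, $\Pi|p(0)|\le K$ and $\kappa\le k$ into \eqref{eq:direct}, \eqref{eq:remainder} and \eqref{eq:Fend}. Your explicit derivation of $\Pi|p(0)|\le K$ from Lemma~\ref{lem:products}, and your nonnegativity and monotonicity checks for the fractional powers, spell out details that the paper leaves implicit.
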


\begin{proof}
Insert $|\mu|=\sqrt{r}$, $\Pi|p(0)|\le K$ and $\kappa\le k$ into \eqref{eq:direct}, \eqref{eq:remainder} and \eqref{eq:Fend}.
\end{proof}

\subsection{The centered test}\label{subsec:centered}

The centered identity of Lemma~\ref{lem:centered} is more delicate, but also much stronger, because its entire error is proportional to the variance $v$. Both of its forms below have the same shape, and we begin by isolating that shape in the following proposition. We also recall that $E_c$ is the error term of the centered identity, defined in \eqref{eq:defEC}.

\begin{proposition}
\label{prop:centered}
Let $\mathcal E\ge0$ be such that $|D(1)|^n+E_c\le v\mathcal E$. Then every counterexample satisfies
\begin{equation}\label{eq:centeredtest}
 1\le K\sqrt{r}+v\mathcal E .
\end{equation}
Hence, a region is excluded if $K\sqrt{r}+v\mathcal E<1$, and in particular if $2\mathcal E\le K$ holds throughout.
\end{proposition}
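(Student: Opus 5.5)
The plan is to start from the centered inequality \eqref{eq:centered} of Lemma~\ref{lem:centered}, which every counterexample satisfies:
\[
1\le |\mu|\Pi|p(0)|+|D(1)|^n+E_c .
\]
The two pieces are handled separately. The hypothesis on $\mathcal E$ replaces $|D(1)|^n+E_c$ directly by $v\mathcal E$. For the product term, write $|\mu|=\sqrt r$ by \eqref{eq:coords}.

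It then remains to show $\Pi|p(0)|\le K$. This comes from Lemma~\ref{lem:products}: the bound $\Pi\le s^{m/2}$ combines with
\[
|p(0)|\le a\bigl((n-A)/m\bigr)^{m/2}
\]
to give
\[
\Pi|p(0)|\le \sqrt A\,\bigl(s(n-A)/m\bigr)^{m/2}=K .
\]
This is exactly the same substitution used in Proposition~\ref{prop:direct}. Together these yield \eqref{eq:centeredtest}. The exclusion criterion is then just the contrapositive: if $K\sqrt r+v\mathcal E<1$ on a region, then no counterexample lies there.

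For the final claim, assume $2\mathcal E\le K$. I would bound $K\le 1$ using the second part of Lemma~\ref{lem:products}, namely $\Pi|p(0)|\le\Pi\le 1$, or directly via AM--GM as $K\le\sqrt A((n-A)/m)^{m/2}\le1$ since $s<1$. Then
\[
K\sqrt r+v\mathcal E\le K\Bigl(\sqrt r+\frac v2\Bigr).
\]
Since $r+v=s<1$, we have $\sqrt r\le\sqrt{1-v}\le 1-v/2$, exactly as at the end of Case~3 of Theorem~\ref{thm:large}. So the sum is at most $K(1-v/2+v/2)=K\le 1$. To make the inequality strict, I use $\sqrt r<\sqrt{1-v}$, which follows from $s<1$ (Proposition~\ref{prop:restrictions}\ref{prop:restrictions1}). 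This gives $K\sqrt r+v\mathcal E<K\le1$, contradicting \eqref{eq:centeredtest}.

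The only delicate point is this strictness, which also covers the degenerate case $v=0$. There it reduces to $K\sqrt r<1$, which holds because $r\le s<1$ and $K\le 1$. I expect no real obstacle: the proposition is bookkeeping on top of Lemma~\ref{lem:centered}.
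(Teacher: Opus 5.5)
Your proposal is correct and matches the paper's proof: the first part is the same substitution $|\mu|=\sqrt r$, $\Pi|p(0)|\le K$ into \eqref{eq:centered}; for the criterion $2\mathcal E\le K$ you bound $\sqrt r+v/2\le\sqrt{1-v}+v/2\le 1$ and use $K\le1$, whereas the paper uses $\sqrt r+v/2\le\sqrt s$ and $K\sqrt s\le s^{n/2}<1$, which is an immaterial variation. One small slip: the bound $\Pi|p(0)|\le1$ says nothing about its majorant $K$, so keep only your second justification, $K=s^{m/2}\sqrt A\,\bigl((n-A)/m\bigr)^{m/2}\le1$, which follows from the AM--GM bound of Lemma~\ref{lem:products}.
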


\begin{proof}
Inequality \eqref{eq:centeredtest} is \eqref{eq:centered} together with $|\mu|=\sqrt{r}$ and $\Pi|p(0)|\le K$. For the second assertion, note that $\sqrt{s}+\sqrt{r} \le 1+1 = 2$, so $\sqrt{s}-\sqrt{r}=v/(\sqrt{s}+\sqrt{r})\ge v/2$. Hence, $2\mathcal E\le K$ and Lemma~\ref{lem:products} give $K\sqrt{r}+v\mathcal E\le K\sqrt{r}+Kv/2\le K\sqrt{s}\le s^{n/2}<1$, contradicting \eqref{eq:centeredtest}.
\end{proof}

It remains to produce admissible $\mathcal E$. The first one is obtained by integrating the two interpolation bounds of Lemma~\ref{lem:centered} directly.
 
\begin{corollary}
\label{cor:Tint}
Let $d\ge|D(1)|$. Then we have $|D(1)|^n+E_c \leq v\mathcal{E}_{\mathrm{int}}$, where
\begin{equation}\label{eq:Tint}
 \mathcal E_{\mathrm{int}}:=k\,d^{\,n-2}
 +\frac{nmA^{3/2}\sqrt{r}}{2}\int_0^1 t^2\beta(t)^{\frac{m-2}2}\min\!\bigg\{
 \frac{c_m\sqrt{\beta(t)}}{|D(t)|},\,
 c_{m,2}(m-1)\bigg\}\dd,
\end{equation}
the first entry being omitted if $D(t)=0$. In particular, we may use $d=\sqrt{1-b+Ar}$.
\end{corollary}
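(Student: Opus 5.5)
The bound splits into the endpoint term $|D(1)|^n$ and the error $E_c$, and I would treat them separately.

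\emph{Endpoint term.} The case $v=0$ comes first. Then all $q_j$ equal $\mu$. Identity \ref{lem:deficit1} of Lemma~\ref{lem:deficit} gives $a\mu=1$, so $D(1)=0$ and $F=D^m$. Hence $E_c=0$ and the claimed inequality reads $0\le0$.

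For $v>0$, factor $|D(1)|^n=|D(1)|^2\,|D(1)|^{n-2}$. The square is $|D(1)|^2=|1-a\mu|^2$, and the covariance bound \ref{lem:deficit7} of Lemma~\ref{lem:deficit} gives
\[
|1-a\mu|^2\le\kappa v\le kv .
\]
For the remaining power, use the hypothesis $|D(1)|\le d$, which gives $|D(1)|^{n-2}\le d^{\,n-2}$. Together these give $|D(1)|^n\le v\,k\,d^{\,n-2}$, which is the first summand of $v\mathcal E_{\mathrm{int}}$.

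\emph{Error term.} Insert the pointwise estimate \eqref{eq:interp1} of Lemma~\ref{lem:centered} into the definition \eqref{eq:defEC} of $E_c$. Replace $|\mu|$ by $\sqrt r$ and $a$ by $A^{1/2}$. The prefactor $na|\mu|\cdot ma^2/2$ becomes exactly $nmA^{3/2}\sqrt r/2$. The factor $v$ pulls out of the integral, leaving
\[
t^2\,\beta(t)^{\frac{m-2}{2}}\min\!\bigg\{\frac{c_m\sqrt{\beta(t)}}{|D(t)|},\,c_{m,2}(m-1)\bigg\}.
\]
Where $D(t)=0$ the first entry is omitted, exactly as in Lemma~\ref{lem:centered}. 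This only happens on a set of measure zero, since $D$ is an affine function of $t$ that is not identically zero because $D(0)=1$. So the integral is well defined and the omission is harmless. Adding the two pieces gives $|D(1)|^n+E_c\le v\mathcal E_{\mathrm{int}}$.

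\emph{The choice $d=\sqrt{1-b+Ar}$.} Equation \eqref{eq:betaD} at $t=1$ gives $|D(1)|^2=1-b+Ar$ exactly, so this $d$ satisfies $d\ge|D(1)|$ with equality.

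There is no real obstacle here, since this is a direct corollary. The only points needing care are the degenerate cases $v=0$ and $D(t)=0$, and the bookkeeping that the constants match.
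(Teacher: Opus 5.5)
Your proposal is correct and matches the paper's proof: you bound $|D(1)|^2$ by $kv$ via \ref{lem:deficit7} of Lemma~\ref{lem:deficit}, use $|D(1)|^{n-2}\le d^{\,n-2}$, and integrate \eqref{eq:interp1} against the prefactor $na|\mu|=n\sqrt{Ar}$. Your separate treatment of $v=0$ is harmless but unnecessary, since $|D(1)|^2\le kv$ already gives $D(1)=0$ there and \eqref{eq:interp1} holds for every $v\ge0$.
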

 
\begin{proof}
By \ref{lem:deficit7} in Lemma~\ref{lem:deficit}, $|D(1)|^2=|1-a\mu|^2\le kv$, so $|D(1)|^n \le kvd^{\,n-2}$. Since $na|\mu|=n\sqrt{Ar}$, \eqref{eq:interp1} bounds $E_c$ by $v$ times the integral in \eqref{eq:Tint}. The choice of $d$ is simply \eqref{eq:betaD}.
\end{proof}
 
In low degrees, bounding the whole interpolation error $|D(1)|^n+E_c$ at once loses too much information. We therefore use an expansion of $F$ in terms of the powers of $t$ and bound each coefficient separately to produce a second possible choice for $\mathcal{E}$. For the following, we define
\[
e_j(\omega) := e_j(\omega_1,\ldots,\omega_m),\qquad J_j(d):=\frac1{n\binom mj}\sum_{\ell=0}^{m-j}\bigg( \genfrac{}{}{0pt}{}{j+\ell}{j} \bigg)d^{\,\ell},
\]
where $e_j(\omega_1,\ldots,\omega_m)$ is the $j$th elementary symmetric polynomial of the $\omega_j$ defined in \eqref{eq:defw}. Then, by the properties $\sum_j\omega_j=0$ and \eqref{eq:moments}, we have $e_0(\omega)=1$, $e_1(\omega)=0$, $\E\bigl(|\omega|^2\bigr)=Av$. Moreover, define
\begin{equation}\label{eq:alpha}
 \alpha_2:=\frac m2,\qquad \alpha_m:=1,\qquad
 \alpha_j:=\left(\frac mj\right)^{\!j/2}\!
          \left(\frac m{m-j}\right)^{\!(m-j)/2}\quad(2<j<m).
\end{equation}
 
\begin{lemma}
\label{lem:coefficients}
Using the above definitions, we have
\begin{equation}\label{eq:Fcoeff}
     F(t)=\sum_{j=0}^me_j(\omega)\,t^jD(t)^{m-j}.
\end{equation}
Moreover, $|e_j(\omega)|\le\alpha_j(Av)^{\smash{j/2}}$ for $2\le j\le m$, and, for $0\le j\le m$ and any $d\ge0$ with $|D(1)|\le d$,
\begin{equation*}\label{eq:Jcoeff}
 \int_0^1t^j|D(t)|^{m-j}\dd\le J_j(d).
\end{equation*}
\end{lemma}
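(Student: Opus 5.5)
The three assertions of Lemma~\ref{lem:coefficients} are independent, and I would prove them in the order stated. For the expansion \eqref{eq:Fcoeff}, recall from \eqref{eq:defw} that $1-atq_j=D(t)+t\omega_j$. Hence
\[
F(t)=\prod_{j=1}^m\bigl(D(t)+t\omega_j\bigr).
\]
Expanding this product and collecting terms by the number of factors $t\omega_j$ chosen gives $\sum_j e_j(t\omega)D(t)^{m-j}=\sum_j e_j(\omega)t^jD(t)^{m-j}$, which is \eqref{eq:Fcoeff}. The normalizations $e_0(\omega)=1$ and $e_1(\omega)=0$ follow from $\sum_j\omega_j=0$.

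For the coefficient bounds, I would use a generating-function and Cauchy-estimate argument. Write $P(x):=\prod_j(1+\omega_jx)=\sum_j e_j(\omega)x^j$. For every $\rho>0$, Cauchy's estimate gives
\[
|e_j(\omega)|\le\rho^{-j}\max_{|x|=\rho}|P(x)|.
\]
On the circle $|x|=\rho$, AM--GM gives $|P(x)|\le\bigl(\E|1+\omega x|^2\bigr)^{m/2}$. The key observation is that the cross term drops out: since $\E(\omega)=0$, we get $\E|1+\omega x|^2=1+\rho^2Av$, independently of the argument of $x$. Therefore
\[
|e_j(\omega)|\le\rho^{-j}(1+\rho^2Av)^{m/2}.
\]
Optimizing in $\rho$ at $\rho^2Av=j/(m-j)$, for $2<j<m$, gives exactly $\alpha_j(Av)^{j/2}$ from \eqref{eq:alpha}. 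The two endpoint cases are handled separately.
\begin{itemize}
\item For $j=2$, Newton's identity with $e_1=0$ gives $e_2=-\tfrac12\sum_j\omega_j^2$, so $|e_2|\le\tfrac12\sum_j|\omega_j|^2=\tfrac m2Av$.
\item For $j=m$, AM--GM gives $|e_m|=\prod_j|\omega_j|\le(Av)^{m/2}$.
\end{itemize}

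For the integral bound, note that $D$ is affine in $t$ with $D(0)=1$, so $D(t)=(1-t)+tD(1)$. The triangle inequality then gives $|D(t)|\le(1-t)+td$. Expanding $\bigl((1-t)+td\bigr)^{m-j}$ binomially and integrating each term with the Beta integral
\[
\int_0^1t^{j+\ell}(1-t)^{m-j-\ell}\dd=\frac{(j+\ell)!\,(m-j-\ell)!}{(m+1)!}
\]
produces, after simplifying the factorials with $n=m+1$, the coefficient
\[
\binom{m-j}{\ell}\frac{(j+\ell)!\,(m-j-\ell)!}{n!}=\frac{1}{n\binom mj}\binom{j+\ell}{j}.
\]
Summing over $0\le\ell\le m-j$ gives exactly $J_j(d)$.

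I expect the only step requiring an idea to be the bound for general $j$. The danger there is using a crude Maclaurin-type inequality, which would not give the sharp constant $\alpha_j$. The centering $\E(\omega)=0$, which makes the circle average angle-free, is what delivers the precise constant. Everything else is routine algebra.
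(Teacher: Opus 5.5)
Your proposal is correct and follows the paper's proof essentially step for step: you use the same expansion of $\prod_j(D(t)+t\omega_j)$, the same bound on $e_2(\omega)$ via $e_1(\omega)=0$, and the same AM--GM plus Cauchy-estimate argument optimized at $\rho^2=j/((m-j)Av)$, with $j=m$ handled by AM--GM. The integral bound likewise matches, using $|D(t)|\le 1-t+td$ followed by the Beta integral. The only detail worth adding is the degenerate case $Av=0$: there every $\omega_j$ vanishes, so $e_j(\omega)=0$ for $j\ge1$ and the coefficient bound holds trivially (equivalently, let $\rho\to\infty$).
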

 
\begin{proof}
Writing $1-atq_j=D(t)+t\omega_j$ and expanding the product gives \eqref{eq:Fcoeff}. Moreover, since $\sum_j\omega_j=0$, we have $e_2(\omega)=-\frac12\sum_j\omega_j^2$, whence $|e_2(\omega)|\le\frac12\sum_j|\omega_j|^2=\frac m2Av$ and $\alpha_2=m/2$. For the remaining $j$, AM--GM and $\sum_j\omega_j=0$ give, for every $z\in\C$,
\[
 \Bigl|\prod_j(1+z\omega_j)\Bigr|
 \le \Bigl(\E\bigl|1+z\omega\bigr|^2\Bigr)^{\smash{m/2}}
 =\bigl(1+|z|^2Av\bigr)^{m/2},\vspace{-3pt}
\]
so Cauchy's estimate on $|z|=R$ for the coefficient of $z^j$ yields $|e_j(\omega)|\le R^{-j}(1+R^2Av)^{m/2}$. For $j<m$ and $Av>0$, this is minimized at $R^2=j/((m-j)Av)$, which gives \eqref{eq:alpha}. The case $j=m$
is then simply AM--GM applied to $\prod_j|\omega_j|$.
 
For the integral estimate, write $D(t)=(1-t)+tD(1)$, so that $|D(t)|\le1-t+td$. Expanding $t^j(1-t+td)^{m-j}$ and using the beta integral $\binom{m-j}{\ell}\int_0^1 t^{j+\ell}(1-t)^{m-j-\ell}\dd = n^{-1}\binom{m}{j}^{-1}\binom{j+\ell}{j}$ gives the stated value.
\end{proof}
 
\begin{corollary}
\label{cor:Tcoeff}
Let $d\ge|D(1)|$. Then we have $|D(1)|^n+E_c \leq v\mathcal{E}_{\mathrm{coef}}$, where
\begin{equation}\label{eq:Tcoeff}
 \mathcal E_{\mathrm{coef}}:=kd^{\,n-2}
 +nA\sqrt{Ar}\sum_{j=2}^m\alpha_j(Av)^{\frac{j-2}2}J_j(d).
\end{equation}
\end{corollary}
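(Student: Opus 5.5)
The plan is to bound the two terms of $|D(1)|^n+E_c$ separately, each by $v$ times the corresponding summand of $\mathcal E_{\mathrm{coef}}$.

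\emph{The endpoint term.} Exactly as in Corollary~\ref{cor:Tint}, item \ref{lem:deficit7} of Lemma~\ref{lem:deficit} gives $|D(1)|^2\le kv$. Combined with $|D(1)|\le d$, this yields
\[
|D(1)|^n\le kv\,d^{\,n-2},
\]
which is $v$ times the first summand of \eqref{eq:Tcoeff}.

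\emph{The integral term.} Use the expansion \eqref{eq:Fcoeff} of Lemma~\ref{lem:coefficients}. Since $e_0(\omega)=1$ and $e_1(\omega)=0$, it gives
\[
F(t)-D^m(t)=\sum_{j=2}^m e_j(\omega)\,t^jD(t)^{m-j}.
\]
Take absolute values, apply the triangle inequality termwise, and integrate over $[0,1]$. Then insert the two estimates of Lemma~\ref{lem:coefficients}: $|e_j(\omega)|\le\alpha_j(Av)^{j/2}$, and $\int_0^1t^j|D(t)|^{m-j}\dd\le J_j(d)$, the latter valid because $d\ge|D(1)|$. This produces
\[
\int_0^1|F-D^m|\dd\le\sum_{j=2}^m\alpha_j(Av)^{j/2}J_j(d).
\]

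\emph{Assembly.} Multiply by $na|\mu|=n\sqrt{Ar}$, as in the definition \eqref{eq:defEC} of $E_c$. Factor $(Av)^{j/2}=Av\,(Av)^{(j-2)/2}$, so every summand carries exactly one factor $v$, and the prefactor becomes $n\sqrt{Ar}\cdot A=nA\sqrt{Ar}$. This gives $E_c\le v$ times the second summand of \eqref{eq:Tcoeff}. Adding the endpoint bound proves the corollary.

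There is no real obstacle; the only points to check are bookkeeping ones. First, the $j=0,1$ terms vanish exactly, so the sum starts at $j=2$. Second, the power of $v$ factors correctly: for $j\ge2$ the exponent $(j-2)/2$ is nonnegative, so the bound is meaningful even as $v\to0$. Third, the degenerate case $Av=0$ needs a separate word: then all $\omega_j=0$, so $F=D^m$ and the claim reduces to the endpoint bound, which holds as above.
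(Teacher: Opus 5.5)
Your proof is correct and follows the paper's argument essentially verbatim. You handle the endpoint term by $|D(1)|^2\le kv$ as in Corollary~\ref{cor:Tint}, and you bound $E_c$ by integrating the expansion \eqref{eq:Fcoeff} termwise with the two estimates of Lemma~\ref{lem:coefficients} and then factoring out $Av$; your separate word on $Av=0$ is harmless but unnecessary, since the coefficient bound already holds trivially there.
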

 
\begin{proof}
By \eqref{eq:Fcoeff} and $e_1(\omega)=0$, we have $F(t)-D(t)^m = \sum_{j\ge2}e_j(\omega)t^jD(t)^{m-j}$, so Lemma~\ref{lem:coefficients} gives $\int_0^1|F(t)-D(t)^m|\dd\le\sum_{j\ge2}\alpha_j(Av)^{j/2}J_j(d)$. Multiplying by $na|\mu|=n\sqrt{Ar}$ and factoring out $Av$ bounds $E_c$ by $v$ times the sum in \eqref{eq:Tcoeff}; the term $kd^{\,n-2}$ handles $|D(1)|^n$ as in Corollary~\ref{cor:Tint}.
\end{proof}
 
\subsection{The endpoint test}\label{subsec:endpoint}
 
The two previous tests obtained estimations by integrating over the entire interval from the prescribed zero $a$ to the centroid $0$. In low degrees, a better bound can be obtained by stopping earlier, namely at
$a(1-T)$ for some $0<T\le1$, and to combine the resulting integral with a linear relation among the centered coefficients $e_j(\omega)$.

In the following, we define, for $0<T\leq 1$ and $0\leq j \leq m$, the quantities
\begin{equation*}\label{eq:CT}
 C_T:=\frac nm+A\!\left((1-T)^2+\frac{1-2T}m\right),\qquad B_j:=(-1)^j (a\mu)^{m-1-j}\bigl(m-j-ma\mu\bigr),
\end{equation*}
where $C_T$ is the mean squared distance from $a(1-T)$ to the remaining roots $z_2,z_3,\dots,z_n$. 
 
\begin{proposition}
\label{prop:dual}
For any $0<T\le1$, we have
\begin{equation*}\label{eq:masteridentity}
    \sum_{j=0}^mB_je_j(\omega)=0,\qquad
    \int_0^TF(t)\dd=\sum_{j=0}^me_j(\omega)\Bigl(T^{j+1}J_j(D(T))+\lambda B_j\Bigr) \quad(\lambda\in\C),
\end{equation*}
and every counterexample satisfies, for every $\lambda\in\C$,
\begin{equation}\label{eq:dual}
 \bigl|TJ_0\bigl(D(T)\bigr)+\lambda B_0\bigr|
 \le\frac{T(sC_T)^{m/2}}n
 +\sum_{j=2}^m\alpha_j(Av)^{j/2}
   \bigl|T^{j+1}J_j\bigl(D(T)\bigr)+\lambda B_j\bigr| .
\end{equation}
\end{proposition}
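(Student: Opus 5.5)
The plan is to derive all three assertions by rewriting the first inverse-moment identity and the endpoint integral in terms of the centered variables $\omega_j$ from \eqref{eq:defw}.

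\textbf{The linear relation.} Write $x:=a\mu$. Since $aq_j=x-\omega_j$, identity \ref{lem:deficit1} of Lemma~\ref{lem:deficit}, $\E(q^{-1})=a$, becomes
\[
\sum_j\frac{1}{x-\omega_j}=m .
\]
Let $P(y):=\prod_j(y-\omega_j)=\sum_{j=0}^m(-1)^je_j(\omega)y^{m-j}$. The left-hand side equals $P'(x)/P(x)$, and this is legitimate because $P(x)=\prod_j aq_j\ne0$. Hence $P'(x)-mP(x)=0$. Expanding term by term gives
\[
\sum_j(-1)^je_j(\omega)\,x^{m-1-j}\bigl(m-j-mx\bigr)=\sum_jB_je_j(\omega)=0 .
\]
For $j=m$ the factor $x^{-1}(m-m-mx)$ is read as the polynomial value $-m$. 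I will check this convention against the definition of $B_m$, since it also covers a possible $\mu=0$.

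\textbf{The integral identity.} Start from the expansion \eqref{eq:Fcoeff} of Lemma~\ref{lem:coefficients}, $F(t)=\sum_je_j(\omega)t^jD(t)^{m-j}$. Substitute $t=Tu$ and use that $D$ is affine with $D(0)=1$, so $D(Tu)=(1-u)+uD(T)$. Each term then becomes
\[
\int_0^Tt^jD(t)^{m-j}\dd=T^{j+1}\int_0^1u^j\bigl(1-u+uD(T)\bigr)^{m-j}\,\mathrm du .
\]
Expanding binomially and using the same beta integrals as in the proof of Lemma~\ref{lem:coefficients} gives exactly $T^{j+1}J_j(D(T))$. This is now an identity with complex $d=D(T)$, not merely a bound. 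Adding $\lambda$ times the vanishing relation $\sum_jB_je_j(\omega)=0$ yields the stated formula for every $\lambda\in\C$.

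\textbf{The inequality.} Since $F(t)=p'(a(1-t))/p'(a)$ and $p(a)=0$,
\[
\int_0^TF\dd=-\frac{p(a(1-T))}{ap'(a)},\qquad |p'(a)|=\frac n\Pi .
\]
Factor $|p(a(1-T))|=aT\prod_{i\ge2}|a(1-T)-z_i|$ and apply AM--GM with the normalization \eqref{eq:normal}. The sum $\sum_{i\ge2}|w-z_i|^2$ at $w=a(1-T)$ expands, using $\sum_{i\ge2}z_i=-a$ and $\sum_{i\ge2}|z_i|^2=n-A$, to $m\,C_T$. Together with $\Pi\le s^{m/2}$ from Lemma~\ref{lem:products}, this gives
\[
\Bigl|\int_0^TF\dd\Bigr|\le\frac{T(sC_T)^{m/2}}{n}.
\]
Finally, isolate the $j=0$ term, where $e_0=1$ and $T^{1}J_0(D(T))=TJ_0(D(T))$, and drop the $j=1$ term, since $e_1(\omega)=0$. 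Bounding the remaining terms with $|e_j(\omega)|\le\alpha_j(Av)^{j/2}$ from Lemma~\ref{lem:coefficients} and applying the triangle inequality gives \eqref{eq:dual}.

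\textbf{Main obstacle.} The step I expect to need most care is the bookkeeping of the $j=m$ term in $B_j$ and the verification of the expression for $C_T$. Everything else is a direct rewriting of identities already in the paper.
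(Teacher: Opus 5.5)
Your proposal is correct and follows the paper's proof essentially step by step. Both use the logarithmic-derivative relation $H'(a\mu)=mH(a\mu)$ for $H(z)=\prod_j(z-\omega_j)$, the rescaling $t=T\tau$ with the beta integrals, and the AM--GM bound on $|p(a(1-T))|$ combined with $|p'(a)|=n/\Pi$ and $\Pi\le s^{m/2}$. The $j=m$ bookkeeping you flag is harmless: Lemma~\ref{lem:deficit} gives $\re(\mu)\ge a(1-\sqrt{1-s})>0$, so $\mu\ne0$ and $B_m=(-1)^{m+1}m$ is well defined and equals the coefficient of $e_m(\omega)$ in $H'-mH$.
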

 
\begin{proof}
Write $z_T:=a(1-T)$. By \eqref{eq:normal}, the roots other than $a$ have sum $-a$ and squared-modulus sum $n-A$, so
\[
 \frac1m\sum_{i=2}^n|z_i-z_T|^2=\frac{n-A+2az_T+mz_T^2}m=C_T,
\]
and AM--GM gives $|p(z_T)|\le aTC_T^{m/2}$. Since $F(t)=p'(a(1-t))/p'(a)$ and $p(a)=0$, integration gives $p(z_T)=-ap'(a)\int_0^TF\dd$. Using $|p'(a)|=n/\Pi$ and $\Pi\le s^{m/2}$ from Lemma~\ref{lem:products} then yields
\begin{equation}\label{eq:endpointintegral}
 \bigg|\int_0^TF(t)\dd\bigg| \le \frac{T(sC_T)^{m/2}}n .
\end{equation}
Define $H(z) := \prod_j(z-\omega_j)$. Since $\omega_j=a(\mu-q_j)$, we have $H(a\mu)=\prod_j(aq_j)\ne0$. Also, by $\E(q^{-1})=a$ from \ref{lem:deficit1} in Lemma~\ref{lem:deficit}, $H'(a\mu)/H(a\mu)=\sum_j(aq_j)^{-1}=m$. Expanding this function gives $H(z)=\sum_{j=0}^m(-1)^je_j(\omega)z^{m-j}$, which implies that
\[
H'(z)-mH(z)=\sum_{j=0}^m(-1)^je_j(\omega)\, z^{m-1-j}(m-j-mz).
\]
Hence, $0= H'(a\mu)-mH(a\mu)=\sum_{j=0}^mB_je_j(\omega)$, which is the first identity.

For the second, substitute $t=T\tau$ and note that $D(T\tau)=(1-\tau)+\tau D(T)$. Hence, the binomial identity and the beta-integral computation of Lemma~\ref{lem:coefficients} yield
\begin{align}\label{eq:intJ}
    \int_0^T\! t^jD(t)^{m-j}\dd = T^{j+1}\sum_{\ell=0}^{m-j} D(T)^\ell \bigg( \genfrac{}{}{0pt}{}{m-j}{\ell} \bigg) \int_0^1\! \tau^{j+\ell}(1-\tau)^{m-j-\ell}\,\mathrm{d}\tau =T^{j+1}J_j\bigl(D(T)\bigr).
\end{align}
Therefore, integrating \eqref{eq:Fcoeff} over $[0,T]$ and adding $\lambda\sum_jB_je_j(\omega)=0$ proves the identity. Finally, to obtain the last inequality, isolate the term $j=0$, use $e_0(\omega)=1$ and $e_1(\omega)=0$, and apply \eqref{eq:endpointintegral}.
\end{proof}
 
The previous proposition thus leads to the following test.
 
\begin{corollary}
\label{cor:shifted}
Let $0<T\le1$ and let $d_T\ge|D(T)|$. Every counterexample satisfies
\begin{equation}\label{eq:shifted}
 1\le\sqrt{Ar}\,T(sC_T)^{m/2}+d_T^{\,n} + n\sqrt{Ar}\sum_{j=2}^m\alpha_j(Av)^{j/2}T^{j+1}J_j(d_T).\vspace{-2pt}
\end{equation}
In particular, one may take $d_T^2=1-bT+ArT^2$ or $d_T=1-T+T\sqrt{kv}$.
\end{corollary}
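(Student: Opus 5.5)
We will derive \eqref{eq:shifted} from inequality \eqref{eq:dual} of Proposition~\ref{prop:dual}, using a single well-chosen value of $\lambda$. The plan is to choose $\lambda$ so that the $j=0$ term on the left becomes a multiple of a simple quantity. We then move the $D(T)^n$-type contribution to the right-hand side and multiply through by $n\sqrt{Ar}$.

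\textbf{Step 1: evaluate the $j=0$ term.} By \eqref{eq:intJ} with $j=0$, we have $TJ_0(D(T))=\int_0^TD(t)^m\dd$. Since $D(t)=1-ta\mu$, this integral equals
\[
\frac{1-D(T)^n}{na\mu}.
\]
This closed form is the reason for pairing the endpoint term with $d_T^{\,n}$.

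\textbf{Step 2: choose $\lambda=0$.} The simplest choice is $\lambda=0$. Then the left-hand side of \eqref{eq:dual} is $|1-D(T)^n|/(n\sqrt{Ar})$, because $|a\mu|=\sqrt{Ar}$. Multiplying \eqref{eq:dual} by $n\sqrt{Ar}$ gives
\[
|1-D(T)^n|\le\sqrt{Ar}\,T(sC_T)^{m/2}+n\sqrt{Ar}\sum_{j\ge2}\alpha_j(Av)^{j/2}T^{j+1}|J_j(D(T))|.
\]
The triangle inequality gives $1\le|1-D(T)^n|+|D(T)|^n$, and $|D(T)|^n\le d_T^{\,n}$ by assumption.

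\textbf{Step 3: replace $|J_j(D(T))|$ by $J_j(d_T)$.} Each $J_j$ is a polynomial in its argument with nonnegative coefficients. Hence $|J_j(D(T))|\le J_j(|D(T)|)\le J_j(d_T)$ for any $d_T\ge|D(T)|$. This yields \eqref{eq:shifted}.

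\textbf{Step 4: justify the two choices of $d_T$.} The first choice is exact: $|D(T)|^2=1-bT+ArT^2$ by \eqref{eq:betaD}. For the second, write $D(T)=(1-T)+TD(1)$. Lemma~\ref{lem:deficit}\ref{lem:deficit7} gives $|D(1)|^2=|1-a\mu|^2\le kv$, and the triangle inequality then gives $|D(T)|\le 1-T+T\sqrt{kv}$.

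The only point needing care is Step~1 in the degenerate case $\mu=0$, where the closed form divides by zero. This case cannot occur, since Lemma~\ref{lem:deficit}\ref{lem:deficit5} gives $\re(\mu)\ge ah>0$. The rest of the argument is bookkeeping. The freedom in $\lambda$ is not needed for this corollary; presumably it is exploited elsewhere in the computer verification.
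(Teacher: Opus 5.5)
Your proposal is correct and follows the paper's proof essentially step for step: you take $\lambda=0$ in \eqref{eq:dual}, use $na\mu\,TJ_0(D(T))=1-D(T)^n$, multiply by $n\sqrt{Ar}$, apply the triangle inequality, and justify the two choices of $d_T$ via \eqref{eq:betaD} and $|D(1)|^2\le kv$. Your remark that $|J_j(D(T))|\le J_j(d_T)$ because $J_j$ has nonnegative coefficients makes explicit a step the paper leaves implicit, and the $\mu=0$ caveat is unnecessary, since integrating $\frac{d}{dt}D(t)^n$ gives the identity without any division.
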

 
\begin{proof}
Integrating $D^{m}$ and using \eqref{eq:intJ} yields $na\mu\,TJ_0(D(T))=na\mu\int_0^TD^m\dd=1-D(T)^n$. Taking $\lambda=0$ in \eqref{eq:dual}, multiplying by $na|\mu|=n\sqrt{Ar}$ and using the triangle inequality gives \eqref{eq:shifted}. The first value for $d_T$ is \eqref{eq:betaD}, and the second follows from $D(T)=1-T+TD(1)$ with $|D(1)|^2\le kv$, which is inequality \ref{lem:deficit7} in Lemma~\ref{lem:deficit}.
\end{proof}

\subsection{The quartic test}
\label{sec:boundary}
 
While most degrees are correctly treated by the tests obtained in this section, degree 4 has one neighborhood where the three critical points are very close to the centroid of the roots relative to the prescribed zero (i.e., $v\ll r$), in which all three tests lose their margin simultaneously. However, a second-order expansion provides one last test, precise in this regime, which allows us to complete the proof. 

\begin{proposition}\label{prop:quarticlocal}
Let $p$ be a quartic with root centroid $b_p$ and let $a\ne b_p$ be a zero of $p$. Then
\begin{equation}\label{eq:localhyp}
 \max_{1\le j\le3}\frac{|\zeta_j-b_p|}{|a-b_p|}\le\frac1{20} \qquad\implies\qquad \frac{\sigma_2^2(p)}3\sum_{j=1}^3|a-\zeta_j|^{-2}\ge1 .
\end{equation}
\end{proposition}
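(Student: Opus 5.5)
We normalize by translation, rotation and scaling so that $b_p=0$ and $a=1$. Both sides of \eqref{eq:localhyp} are invariant under these operations, since $\sigma_2$ scales linearly and $|a-\zeta_j|^{-2}$ scales inversely quadratically. Then $p(z)=z^4+c_2z^2+c_3z+c_4$ and $p'(z)=4\prod_j(z-\zeta_j)$ with $\sum_j\zeta_j=0$, so
\[
c_2=2e_2(\zeta),\qquad c_3=-4e_3(\zeta),\qquad c_4=-1-c_2-c_3,
\]
the last from $p(1)=0$. The hypothesis becomes $|\zeta_j|\le\varepsilon:=1/20$. Thus $|c_2|\le 6\varepsilon^2$ and $|c_3|\le 4\varepsilon^3$, and $p$ is a small perturbation of $z^4-1$. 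Since the centroid is $0$, $\sigma_2^2(p)=\frac14\sum_i|z_i|^2$. The goal is to show
\[
\Psi:=\frac{1}{12}\sum_i|z_i|^2\cdot\sum_j|1-\zeta_j|^{-2}\ge1
\]
by expanding both factors in powers of $\zeta$, with rigorous remainders.

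For the critical-point factor we expand $|1-\zeta|^{-2}=\sum_{k,l\ge0}\zeta^k\overline\zeta^{\,l}$. Using $\sum\zeta_j=0$ and $\sum\zeta_j^2=-2e_2$, this gives
\[
\sum_j|1-\zeta_j|^{-2}=3+\sum_j|\zeta_j|^2-4\re e_2+(\text{cubic terms})+O(\varepsilon^4),
\]
with an explicit geometric bound on the tail. For the root factor, Rouché's theorem on small disks about the fourth roots of unity $\omega$ localizes one root $z=\omega+\delta_\omega$ near each $\omega$. Solving $p(\omega+\delta)=0$ by one Newton step gives
\[
\delta_\omega=-\frac{c_2(\omega^2-1)+c_3(\omega-1)}{4\omega^3}+O(\varepsilon^4).
\]
Since $\overline\omega=\omega^3$, the first-order contribution is $2\re\sum_\omega\overline\omega\delta_\omega=2\re(c_2+c_3)$. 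Hence
\[
\sum_i|z_i|^2=4+2\re(c_2+c_3)+O(\varepsilon^4),
\]
where the $O(\varepsilon^4)$ collects $|\delta|^2$ and the Newton error, both bounded explicitly from the coefficient bounds.

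Multiplying the two expansions, the second-order part of $\Psi-1$ is
\[
\tfrac13\Bigl(\sum|\zeta_j|^2-\re\sum\zeta_j^2\Bigr)=\tfrac23\sum(\im\zeta_j)^2\ge0 .
\]
This is nonnegative but degenerate when all $\zeta_j$ are real, and that degeneracy is the main obstacle. The first thing I will try is to carry the expansion to third order, where $c_3=-4e_3$ and the cubic terms $\sum\zeta^3=3e_3$ enter. The aim is to show that the combined quadratic-plus-cubic form, on $\{|\zeta_j|\le\varepsilon,\ \sum\zeta_j=0\}$, dominates $C\varepsilon^4$ times a norm that vanishes only at $\zeta=0$, for example $\sum|\zeta_j|^2$.

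If the cubic part does not help in the real direction, I will instead work with an exact parametrization. Write $\zeta_j=x_j+iy_j$ and separate $\Psi-1$ into $\frac23\sum y_j^2$ plus terms that vanish at $y=0$. For real $\zeta$, the polynomial $p$ has real coefficients, so both sums are real-analytic in two real parameters $(e_2,e_3)$. In that real case I will verify positivity of $\Psi-1$ directly, expecting a leading term of order $\sum x_j^4$ coming from $|\delta|^2$ and the quartic part of $\sum(1-x_j)^{-2}$.

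The final step is to make every remainder explicit with $\varepsilon=1/20$. This uses Cauchy estimates for the root perturbations and the geometric tail of $\sum\zeta^k\overline\zeta^{\,l}$, and it requires checking that the positive leading form absorbs them, i.e. that $1/20$ is small enough. Should the constants be tight, the last resort is an interval-arithmetic check over the compact parameter set $(e_2,e_3)$ with $|e_2|\le3\varepsilon^2$ and $|e_3|\le\varepsilon^3$, scaled by $\sum|\zeta_j|^2$. This is in the same spirit as Section~\ref{sec:computer}.
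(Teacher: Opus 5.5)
\textbf{Where the proposal fails.} The step that fails is your second-order computation, and the second half of your plan is built on it. Your two expansions are correct. Write $\tau:=\sum_j|\zeta_j|^2$ and $R:=\re\sum_j\zeta_j^2$. Then $\sum_j|1-\zeta_j|^{-2}=3+\tau+2R+O(|\zeta|^3)$, and, since $c_2=2e_2=-\sum_j\zeta_j^2$, also $\sum_i|z_i|^2=4-2R+O(|\zeta|^3)$. But their product divided by $12$ is
\[
 \Psi=1+\frac{\tau}{3}+\frac{R}{6}+O(|\zeta|^3),
\]
not $1+\frac13(\tau-R)$. Because $|R|\le\tau$, the quadratic part is at least $\tau/6$, and for real $\zeta$ it equals $\tau/2$.

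You can check this on $\zeta=(x,-x,0)$ with real $x$. Then $p(z)=z^4-2x^2z^2+2x^2-1$ has roots $\pm1,\pm i\sqrt{1-2x^2}$, and a direct expansion gives $\Psi=1+x^2+O(x^4)$. Your formula predicts no $x^2$ term at all. So the degeneracy in the real direction does not exist. The planned third-order analysis, the expected $\sum x_j^4$ leading term, and the interval-arithmetic fallback over $(e_2,e_3)$ therefore all chase a phantom. As written they are also a list of contingencies rather than an argument.

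\textbf{How to repair it, and how the paper proceeds.} With the corrected quadratic form, your perturbative route can be closed. This requires every remainder to be bounded \emph{relative to} $\tau$, not as an absolute $O(\varepsilon^4)$, because the main term vanishes at $\zeta=0$. For instance, $|4e_3|\le2\varepsilon\tau$ and $|c_2|^2\le\tau^2\le3\varepsilon^2\tau$. You would also need explicit Rouch\'e/Newton bounds on the root displacements.

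The paper avoids locating roots altogether. By AM--GM, $\sigma_2^4(p)\ge|z_1z_2z_3z_4|=|p(0)|$. Integrating $p'(t)=4t^3-2t\sum_j\zeta_j^2-4\zeta_1\zeta_2\zeta_3$ over $[0,1]$ gives $-p(0)=1-\sum_j\zeta_j^2-4\zeta_1\zeta_2\zeta_3$, hence $\sigma_2^4(p)\ge1-R-\tau/10$. This bound loses nothing at second order, because the roots lie near the fourth roots of unity, which all have modulus one.

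The paper pairs this with an elementary bound for $S:=\frac13\sum_j|1-\zeta_j|^{-2}$, namely $S\ge1+\frac{2R}{3}+\frac{\tau}{4}$. Multiplying gives $\sigma_2^4S^2\ge1+\frac{R}{3}+\frac{2\tau}{5}-O(\tau^2)$, which is at least $1$ using only $|R|\le\tau\le3/400$ and rational arithmetic.
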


\begin{proof}
Both of the inequalities in \eqref{eq:localhyp} are invariant under affine changes of variable, so we may assume $b_p=0$ and $a=1$. Then $\sum_j\zeta_j=0$ and $|\zeta_j|\le\frac1{20}$. Put
\[
 V:=\sigma_2^2(p)=\frac14\sum_i|z_i|^2,\quad
 S:=\frac13\sum_j|1-\zeta_j|^{-2},\quad
 \tau:=\sum_j|\zeta_j|^2\le\frac{3}{20^2},\quad
 R:=\sum_j\re(\zeta_j^2),
\]
so that the second inequality in \eqref{eq:localhyp} becomes $VS\ge1$ and $R\in[-\tau,\tau]$.

Since $\sum_j\zeta_j=0$, we have $p'(t)=4t^3-2t\sum_j\zeta_j^2-4\zeta_1\zeta_2\zeta_3$. Integrating over $[0,1]$ and using $p(1)=0$ gives $-p(0)=1-\sum_j\zeta_j^2-4\zeta_1\zeta_2\zeta_3$. By AM--GM, and since $|\zeta_1\zeta_2\zeta_3| \le \frac1{20}|\zeta_1\zeta_2|\le\frac\tau{40}$,
\begin{equation}\label{eq:V}
     V^2\ge|z_1z_2z_3z_4|=|p(0)|\ge1-R-\frac{\tau}{10}.
\end{equation}
Define $w:=z^2/(1-z)$, so that $(1-z)^{-1}=1+z+w$, $w-z^2=zw$ and $|w|\le |z|^2/(1-|z|)$. Hence,
\[
 |1-z|^{-2}\ge|1+z|^2+2\re(w)+2\re(\bar zw)
 \ge1+2\re(z)+|z|^2+2\re(z^2)-\frac{4|z|^3}{1-|z|},
\]
where the last term is at most $\frac{4}{19}|z|^2$. Averaging over $z=\zeta_j$ and using $\sum_j\zeta_j=0$ thus yields
\begin{equation}\label{eq:S}
    S\ge1+\frac{\tau+2R}3-\frac{4\tau}{57} \geq 1+\frac{2R}{3}+\frac{\tau}{4}.
\end{equation}
Both of the lower bounds \eqref{eq:V} and \eqref{eq:S} are positive, and $(1+y)^2\ge1+2y$, so
\begin{align*}
    V^2S^2 &\ge \Bigl(1-R-\frac{\tau}{10}\Bigr)\Bigl(1+\frac{4R}3+\frac{\tau}{2}\Bigr)  = 1+\frac{R}{3}+\frac{2}{5}\tau-\frac{4}{3}R^2 -\frac{19}{30}R\tau -\frac{1}{20}\tau^2 \\
    &\geq 1-\frac{\tau}{3}+\frac{2}{5}\tau-\frac{4\cdot 3}{3\cdot20^2}\tau -\frac{19\cdot 3}{30\cdot 20^2}\tau -\frac{1\cdot 3}{20\cdot 20^2}\tau = 1+\frac{1237\tau}{3\cdot20^{3}} \geq 1,
\end{align*}
where we used the fact that $|R|\leq \tau \leq 3/20^2$.
\end{proof}
 
It remains to write the hypothesis \eqref{eq:localhyp} in the coordinates \eqref{eq:coords}. Assume \eqref{eq:normal} and \eqref{eq:counter} with $n=4$. The proof of Proposition~\ref{prop:restrictions} gives $|q_j|^{-2}\le3A$ for every $j$, and the mean minimizes the mean square. Hence, since $\E(q^{-1})=a$, we have for $r>0$
\[
 \kappa=\E\bigl|q^{-1}-a\bigr|^2\le\E\bigl|q^{-1}-\mu^{-1}\bigr|^2
 =\E\bigg(\frac{|q-\mu|^2}{|q|^2r}\bigg)\le\frac{3Av}r .
\]
Furthermore $|\zeta_j|^2=\bigl|\sum_{\ell\ne j}\zeta_\ell\bigr|^2
\le2\sum_{\ell\ne j}|\zeta_\ell|^2=2(3\kappa-|\zeta_j|^2)$, whence
$|\zeta_j|^2\le2\kappa$ for every $j$ and
\begin{equation}\label{eq:recognize_local}
 \max_j\frac{|\zeta_j|}a\le\sqrt{\frac{6v}r}.
\end{equation}
Since $r=s-v$, the condition $6v/r\le20^{-2}$ reads $2401v\le s$. Hence, it forces $r>0$ and, by \eqref{eq:recognize_local}, implies \eqref{eq:localhyp}. In the normalization \eqref{eq:normal} one has $\sigma_2(p)=1$ and $\sum_j|a-\zeta_j|^{-2}=3s$, so Proposition~\ref{prop:quarticlocal} would give $s\ge1$, contradicting \eqref{eq:counter}. Hence any region of $\mathcal D_4$ on which $2401v\le s$ holds is automatically excluded. This conclusion will allow us to prove the main theorem for $n=4$, thus completing the proof.

\section{Finite exact verification}\label{sec:computer}
 
Section~\ref{sec:finitebounds} reduces a hypothetical counterexample of degree $4\le n\le2047$ to a point of the compact search space $\mathcal D_n$ in \eqref{eq:searchspace} satisfying the restrictions of Proposition~\ref{prop:restrictions} and all three tests (four in the case $n=4$). We now carry out Steps~3 and~4 of Subsection~\ref{subsec:principle}, namely a finite covering of $\mathcal D_n$ by boxes, on each of which one of those conditions fails.
 
\subsection{Boxes, certificates, and coverage}\label{subsec:boxes}

Call a point of $\mathcal D_n$ \emph{feasible} if it satisfies every conclusion of Proposition~\ref{prop:restrictions}, every test of Subsections~\ref{subsec:direct}--\ref{subsec:endpoint}, and, when $n=4$, the inequality $2401v>s$ from Subsection~\ref{sec:boundary}. A closed region $\mathcal B\subset\mathcal D_n$ with rational endpoints --- in practice a box $I_A\times I_r\times I_v$, or the prism of Subsection~\ref{subsec:blocks} --- is \emph{excluded} when a rigorous computation shows that every point of $\mathcal B$ violates at least one of those conditions. The admissible ways of doing so %, which we shall prove below,
are the following:
\begin{itemize}
\item an upper bound over $\mathcal B$ for one side of a restriction in either Proposition~\ref{prop:restrictions} or \eqref{eq:explicitbounds} falls below a lower bound for the other side; in particular a negative upper bound for the nonnegative quantity $\beta(t)$ is impossible, by \eqref{eq:betachord};

\item an upper bound over $\mathcal B$ for the right-hand side of the direct test \eqref{eq:directtest}, a centered test \eqref{eq:centeredtest}, or a shifted endpoint test \eqref{eq:shifted}, is less than one;
\item the non-strict centered criterion $2\mathcal{E}\leq K$ in Proposition~\ref{prop:centered}, or its variant \eqref{eq:cubicmono} below for a prism, holds throughout $\mathcal B$;
\item a lower bound for the left-hand side of  \eqref{eq:dual} exceeds an upper bound of its right-hand side;
\item $n=4$ and $2401v\le s$ on $\mathcal B$, so that Subsection~\ref{sec:boundary} applies.
\end{itemize}
 
\begin{lemma}
\label{lem:certificate}
Let $4\le n\le2047$ and let $\mathcal B_1,\ldots,\mathcal B_N$ be closed regions with $\mathcal D_n\subset\bigcup_i\mathcal B_i$, each of them excluded. Then Theorem~\ref{thm:main} holds in degree $n$.
\end{lemma}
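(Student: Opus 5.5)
The argument is by contradiction. Suppose Theorem~\ref{thm:main} fails in some degree $n$ with $4\le n\le2047$. After the reductions at the start of Section~\ref{sec:normalization} ($\sigma_2(p)=0$ and a multiple prescribed zero are trivial), we may assume \eqref{eq:normal} and \eqref{eq:counter}. We then define $A,r,v,b$ by \eqref{eq:coords}. Lemma~\ref{lem:critical} gives $0<A\le m$ (with $a>0$ shown just after it), and \eqref{eq:moments} gives $0\le r\le s<1$ and $0\le v\le s<1$. So the point $P=(A,r,v)$ lies in $\mathcal D_n$ from \eqref{eq:searchspace}, and by the covering hypothesis $P\in\mathcal B_i$ for some $i$.

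Next we check that $P$ is feasible in the sense of Subsection~\ref{subsec:boxes}. All the restrictions hold at $P$:
\begin{itemize}
\item Proposition~\ref{prop:restrictions} and the bounds \eqref{eq:explicitbounds} of Remark~\ref{rem:boxeval}, including $\beta(t)\ge0$ from \eqref{eq:betachord};
\item the direct test \eqref{eq:directtest} by Proposition~\ref{prop:direct};
\item the centered test \eqref{eq:centeredtest} with either admissible $\mathcal E$, by Corollaries~\ref{cor:Tint} and~\ref{cor:Tcoeff}, with $d$ any value allowed there;
\item the dual inequality \eqref{eq:dual} and the shifted test \eqref{eq:shifted} for every $T\in(0,1]$ and $\lambda\in\C$, by Proposition~\ref{prop:dual} and Corollary~\ref{cor:shifted};
\item when $n=4$, the strict inequality $2401v>s$, since the closing argument of Subsection~\ref{sec:boundary} shows $2401v\le s$ contradicts \eqref{eq:counter}.
\end{itemize}
The non-strict criterion $2\mathcal E\le K$ is also incompatible with a counterexample, because Proposition~\ref{prop:centered} shows it forces $K\sqrt r+v\mathcal E<1$. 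The prism variant \eqref{eq:cubicmono} should follow the same way once it is stated.

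Finally we match each exclusion mechanism of $\mathcal B_i$ to a contradiction at $P$. If an upper bound over $\mathcal B_i$ for one side of a restriction lies below a lower bound for the other side, the restriction fails at every point of $\mathcal B_i$, including $P$. If a certified upper bound for the right side of a test is below $1$, the test fails at $P$. The other mechanisms work the same way.

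The main subtlety is that $b$ and $\mu$ are not coordinates of the box. The bounds must hold for every $b$ consistent with $P$, and they do: $\beta$ and $|D|^2$ decrease in $b$, $b\ge b_0$, and $\mu$ lies in the enclosure of Remark~\ref{rem:boxeval} after conjugating so that $\im(\mu)\ge0$. Likewise, the choices of $d$, $d_T$, $T$ and $\lambda$ made in the certificate are legitimate by the corollaries. Directed rounding ensures that the computed quantities really bound the true values over $\mathcal B_i$. In every case $P$ violates a condition it must satisfy, which is a contradiction, so Theorem~\ref{thm:main} holds in degree $n$.
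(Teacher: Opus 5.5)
Your proof is correct and follows the paper's argument: a normalized counterexample gives a feasible point of $\mathcal D_n$, that point lies in some $\mathcal B_i$, and this contradicts the fact that every point of $\mathcal B_i$ violates one of the conditions. Your extra bookkeeping only spells out what the paper packs into the definition of ``excluded'': the matching of each exclusion mechanism, the handling of $b$ and $\mu$, and the prism criterion \eqref{eq:cubicmono}, which Subsection~\ref{subsec:blocks} justifies through $\Psi(\sqrt r)\le\Psi(\sqrt s)=K\sqrt s<1$.
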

 
\begin{proof}
A counterexample in degree $n$ satisfies \eqref{eq:normal} and \eqref{eq:counter} after normalization, hence determines a feasible point of $\mathcal D_n$ by Subsection~\ref{subsec:principle} and, when $n=4$, Subsection~\ref{sec:boundary}. That point lies in some $\mathcal B_i$, contradicting the exclusion of $\mathcal B_i$.
\end{proof}

The boxes are recorded as a finite binary tree, so that each box can be reconstructed exactly. The root is $\mathcal D_n$, each internal node splits one coordinate interval at its rational midpoint into two children, and each leaf records an exclusion from the list above or is explicitly marked unresolved. By induction on the tree, the leaves cover $\mathcal D_n$. Hence, if no leaf is unresolved and recomputation confirms that every leaf is excluded, the leaves satisfy Lemma~\ref{lem:certificate}. 
 
Two conventions are worth noting. First, the implementation parametrizes the variables $(r,v)$ by
\[
 \eta:=\sqrt{1-s},\qquad f:=\sqrt{v\smash{/}s\vphantom{A}},\qquad\text{so that}\quad s=1-\eta^2,\quad r=s(1-f^2),\quad v=sf^2,
\]
which maps $[0,1]^2$ onto $\{(r,v):r,v\ge0,\ r+v\le1\}$. It makes the delicate boundary $s\to1$ easy to subdivide, and it turns the quartic rule of Subsection~\ref{sec:boundary} into the linear condition $2401f^2\le1$. The part of $\mathcal D_n$ left outside that image, where $r+v>1$, needs no subdivision since it is excluded by $s<1$. Second, $\mathcal D_n$ is taken as a closed set, so it contains boundary points that no actual configuration realizes. However, this is irrelevant since such points will be excluded like any others.
 
\subsection{Certified arithmetic}\label{subsec:arithmetic}

The exclusions of Subsection~\ref{subsec:boxes} require bounds that hold rigorously at every point of a box. Floating-point evaluation cannot guarantee this, since a rounding error may move a computed value to the wrong side of a comparison. The checker therefore replaces every quantity by a dyadic interval $[\ell/S_0,\,u/S_0]$ with $S_0=2^{96}$ and integers $\ell\le u$ of arbitrary size, and rounds every operation outwards so that the result contains all values the quantity takes on the box.

Concretely, a rational $p/q$ with $q>0$ is enclosed by $\bigl[\lfloor S_0p/q\rfloor,\lceil S_0p/q\rceil\bigr]/S_0$. Addition and negation act exactly on the endpoints. A product is enclosed by the extreme values of the four endpoint products, rounded outwards, and a quotient likewise, provided the denominator interval does not contain zero. The square root of a nonnegative interval is $\bigl[\lfloor\sqrt{\ell S_0}\rfloor,\lceil\sqrt{uS_0}\rceil\bigr]/S_0$, computed with the integer square root. Powers of nonnegative intervals follow by repeated multiplication and a square root if needed. Complex quantities are enclosed by rectangles, that is, pairs of real intervals, and a modulus is enclosed from both sides as the square root of the enclosure of the squared modulus. Therefore, no floating-point number ever enters a comparison that accepts a box.
 
By induction on each arithmetic expression, the computed interval contains the true value at every point of the box. Hence an upper endpoint below one proves the inequality required by a test, and a lower endpoint above an upper endpoint proves two enclosures incompatible.
 
\subsection{Certified integration}\label{subsec:quadrature}
 
The direct and centered tests contain integrals over $[0,1]$. All of them are bounded rigorously by the same positive-weight chord rule, which requires only convexity.
 
\begin{lemma}\label{lem:chord}
Let $g$ be nonnegative and convex on $[\ell,h]$ with $0\le\ell<h$. Then
\begin{equation*}\label{eq:chordweights}
 \int_\ell^ht^jg(t)\dd\le w_\ell\,g(\ell)+w_h\,g(h),\qquad
 w_\ell:=\int_\ell^h t^j\frac{h-t}{h-\ell}\dd,\quad
 w_h:=\int_\ell^h t^j\frac{t-\ell}{h-\ell}\dd,
\end{equation*}
for any integer $j\ge0$. Moreover $\beta^\rho(t)$ and $\gamma^\rho(t)$ are convex on $[0,1]$ for every $\rho\ge1/2$.
\end{lemma}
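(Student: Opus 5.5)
The chord rule comes straight from convexity, and the convexity claims reduce to the statement that the square root of a convex quadratic with nonnegative discriminant-type structure is convex.

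\emph{Chord rule.} For $t\in[\ell,h]$, convexity of $g$ gives
\[
g(t)\le \frac{h-t}{h-\ell}\,g(\ell)+\frac{t-\ell}{h-\ell}\,g(h).
\]
Since $t^j\ge0$ on $[\ell,h]$ (this is where $\ell\ge0$ is used), we may multiply by $t^j$ and integrate. The two resulting integrals are exactly $w_\ell$ and $w_h$, which are explicit polynomial expressions in $\ell,h$ and can be computed exactly in rational arithmetic. Nonnegativity of $g$ is not needed for this inequality itself; it only ensures the weights are applied to meaningful nonnegative values.

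\emph{Convexity of $\beta^\rho$ and $\gamma^\rho$.} Both functions are nonnegative quadratics in $t$ with nonnegative leading coefficient. We have $\beta(t)=\E|1-atq|^2$, and $\gamma(t)=s(a^2(1-t)^2+\kappa)$. The plan is to first show that $\sqrt{\beta}$ and $\sqrt{\gamma}$ are convex. Then, for $\rho\ge1/2$, we write $g^\rho=(\sqrt g)^{2\rho}$. This is the composition of the nonnegative convex function $\sqrt g$ with $x\mapsto x^{2\rho}$, which is convex and nondecreasing on $[0,\infty)$ because $2\rho\ge1$. Hence $g^\rho$ is convex.

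For $\sqrt\beta$, we use the representation
\[
\sqrt{\beta(t)}=\bigl\|(1-atq_j)_j\bigr\|_{\ell^2}/\sqrt m.
\]
This is a norm of a vector depending affinely on $t$, hence convex in $t$. Similarly,
\[
\sqrt{\gamma(t)}=\sqrt s\,\bigl\|\bigl(a(1-t),\sqrt\kappa\bigr)\bigr\|_2,
\]
again a norm of an affine function of $t$, hence convex.

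In the box computations, $\beta$ and $\gamma$ are replaced by their explicit upper bounds from \eqref{eq:explicitbounds}, for instance $1-b_0t+Ast^2$ and $s(A(1-t)^2+k)$, possibly truncated at $1$. If one also wants convexity of these surrogates, the same argument applies to any quadratic $c_0+c_1t+c_2t^2$ that is nonnegative on the real line with $c_2\ge0$, since its square root is the Euclidean norm of an affine map. A surrogate that is negative somewhere needs separate treatment, but such boxes are already excluded by \eqref{eq:betachord}. A minimum with $1$ is not convex in general, so I would apply the lemma only to the quadratic itself and simply bound the minimum above by it.

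The only step requiring any care is that last one: making sure the functions to which the chord rule is actually applied are the genuine convex quadratics, or their norm representations, rather than the truncated bounds.
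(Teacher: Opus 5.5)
Your argument is correct and is essentially the paper's proof. You multiply the chord inequality by $t^j\ge0$ and integrate, write $\sqrt\beta$ and $\sqrt\gamma$ as Euclidean norms of vectors depending affinely on $t$, and compose with the convex nondecreasing map $x\mapsto x^{2\rho}$; your two-component vector $\sqrt s\,(a(1-t),\sqrt\kappa)$ for $\gamma$ is a slightly simpler choice than the paper's appeal to $(a(1-t)-\zeta_\ell)_\ell$ from Lemma~\ref{lem:direct}.

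Your closing aside, however, is off in two ways. First, the surrogates never need to be convex. The paper applies the chord rule to the genuine convex functions $\beta^\rho,\gamma^\rho$ and only replaces the endpoint values $g(\ell),g(h)$ by certified upper bounds. This is legitimate because $w_\ell,w_h\ge0$, and it also justifies capping those bounds by one. Second, your claim that a surrogate negative somewhere is already excluded by \eqref{eq:betachord} only covers negativity on $[0,1]$. It does not cover a quadratic that is positive on $[0,1]$ but has positive discriminant; its square root is concave there, so your norm argument does not apply to it.
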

 
\begin{proof}
Multiply the chord inequality $g(t)\le\frac{h-t}{h-\ell}\,g(\ell)+\frac{t-\ell}{h-\ell}\,g(h)$ by $t^j\ge0$ and integrate. For the convexity, note that
\[
 \sqrt{\smash{\beta(t)}\vphantom{A_2}}=\frac1{\sqrt m}\bigl\|(1-atq_1,\ldots,1-atq_m)\bigr\|_2
\]
is a norm of a vector depending affinely on the real variable $t$, hence convex by the triangle inequality; the same holds for $\sqrt\gamma$ by Lemma~\ref{lem:direct}. Composing with the map $x\mapsto x^{2\rho}$ gives the claim.
\end{proof}

The exponents in \eqref{eq:directtest} and \eqref{eq:Tint} are $\frac{m-1}{2}$ and $\frac{m-2}{2}$, both at least $1/2$ for $n\ge4$, so Lemma~\ref{lem:chord} applies to each majorant. The weights are nonnegative rationals, computed exactly and enclosed outwards, so $g(\ell)$, $g(h)$ may be replaced by the certified upper bounds \eqref{eq:explicitbounds}. Endpoint bounds for $\beta$ may be capped by one using \eqref{eq:betachord}, and the $\gamma$ majorant is used on a subinterval only when both its endpoint bounds are at most one, so convexity gives $\gamma\le1$ throughout. For the denominator in \eqref{eq:Tint}, we have $|D(t)|\ge1-h\sqrt{Ar}$ on $[\ell,h]$ by \eqref{eq:explicitbounds}; when this is positive, the chord integral of the numerator is divided by this quantity, and otherwise only the second entry of the minimum is used.

To bound an integral over $[0,1]$, the chord rule is applied on each interval of a finite rational mesh $0=t_0<t_1<\cdots<t_N=1$ and the results are added. Two meshes are used. The first consists of the points $j/q$ and $j/(16q)$, $0\le j\le q$: it is uniform on $[0,1]$, with extra points near $t=0$, where the powers $\beta(t)^{(m-1)/2}$ decay quickly when $m$ is large. The second, taken from \cite[Section~7]{Zhang26}, is refined toward both endpoints: each gap of $0,2^{-15},2^{-14},\ldots,2^{-1},1$ is divided into $\max(1,\lceil q/12\rceil)$ equal parts, and the reflection $t\mapsto1-t$ of every point is added. The three-parameter checker uses the first mesh; in the two-parameter certificate, each degree block records which mesh it uses.
 
\subsection{Degree blocks}\label{subsec:blocks}
 
For degrees $27\le n\le2047$, we use two further simplifications.

First, $r$ is not subdivided: the regions are the prisms $\{(A,r,v)\in\mathcal D_n: A\in I_A,\ r+v\in I_s\}$ over boxes $I_A\times I_s$ in the coordinates $(A,s)$, so that $r$ ranges over the entire interval allowed by Proposition~\ref{prop:restrictions} and $v=s-r$. Apart from $b$, which Remark~\ref{rem:boxeval} bounds below independently of $r$, the direct test \eqref{eq:directtest} involves $r$ only through the term $\frac mn K\sqrt r$, so we replace $\sqrt r$ by its largest admissible value. The centered test \eqref{eq:centeredtest} depends on $r$ nonlinearly and needs an extra justification. In this case, we bound every ingredient of $\mathcal E_{\mathrm{int}}$ independently of $r$ (for instance, $|D(t)|$ from below by \eqref{eq:explicitbounds} at the largest admissible $r$), except the factor $\sqrt r$ of the prefactor $n\sqrt{Ar}$ in \eqref{eq:Tint}. This gives $\mathcal E_{\mathrm{int}}\le H+C\sqrt r$ with constants $H,C\ge0$ on the box. Writing $z=\sqrt r\in[0,\sqrt s]$, the right-hand side of \eqref{eq:centeredtest} is then at most $\Psi(z):=Kz+(s-z^2)(H+Cz)$, whose derivative $\Psi'(z)=K+Cs-2Hz-3Cz^2$ decreases on $[0,\sqrt s]$. Hence, if
\begin{equation}\label{eq:cubicmono}
 K\ge2H\sqrt s+2Cs,
\end{equation}
then $\Psi'(\sqrt s)=K-2H\sqrt s-2Cs\ge0$, so $\Psi$ is nondecreasing on $[0,\sqrt s]$ and, by Lemma~\ref{lem:products}, $\Psi(\sqrt r)\le\Psi(\sqrt s)=K\sqrt s\le s^{n/2}<1$. This contradicts \eqref{eq:centeredtest}, so the region is excluded.

Second, one box may serve a block $n_-\le n\le n_+$ of consecutive degrees if every bound is uniform in $n$. A block starts from $\{n_-,\ldots,n_+\}\times[0,n_+-1]\times[0,1]$ in the coordinates $(n,A,\eta)$, which loses nothing since $A\le m\le n_+-1$ by Lemma~\ref{lem:critical}. For uniformity, powers of bases in $[0,1]$ take their smallest exponent over the block in upper bounds and their largest in lower bounds, and nonnegative prefactors take their maximum. Moreover, $c_m<5/3$, $c_{m,2}<3$, and $m/n$, $k$, $L$ and the factor $\sqrt A\,((n-A)/m)^{m/2}$ of $K$ increase with $n$. For the last one, the logarithmic derivative relative to $m$ is $\tfrac12g(x)$ at $x=(1-A)/m>-1$, where $g(x)=\log(1+x)-\frac{x}{1+x}\ge0$ since $g(0)=0$ and $g'(x)=x/(1+x)^2$. For fixed $n$, this factor increases up to $A=1$ and decreases afterwards. Over an $A$-interval, its upper bound therefore comes from the point nearest $1$, and its lower bound, needed in \eqref{eq:cubicmono}, comes from the endpoints, or is $0$ if the interval reaches $n_-$.

Proposition~\ref{prop:restrictions}.\ref{prop:restrictions2} reads $\bigl(ns(1+A)/m\bigr)^m\ge n^2$, whose base exceeds one, so the rule for powers above does not apply. Since $n/(n-1)$ decreases in $n$, the base is at most $\theta:=n_-s(1+A)/(n_--1)$ on the block. The left-hand side is therefore at most $\max\{1,\theta\}^{\,n_+-1}$ at every degree, and the region is excluded if this is below $n_-^2\le n^2$. In a fixed degree, \ref{prop:restrictions2} also raises the lower end of $s$ to $\frac mn n^{2/m}/(1+A)$, with $n^{2/m}$ enclosed by integer bisection of its $m$th power.

\subsection{Results}\label{subsec:results}
 
\begin{proposition}
\label{prop:finite}
Theorem~\ref{thm:main} holds for every degree $4\le n\le2047$.
\end{proposition}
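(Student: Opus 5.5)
The plan is to reduce Proposition~\ref{prop:finite} to Lemma~\ref{lem:certificate}, degree by degree, and then discharge its hypothesis through the recorded computation. Fix $4\le n\le2047$ and suppose a counterexample exists. After the normalization \eqref{eq:normal}, Subsection~\ref{subsec:principle} together with Proposition~\ref{prop:restrictions} and Remark~\ref{rem:boxeval} place it at a feasible point of the compact set $\mathcal D_n$. For $n=4$, Subsection~\ref{sec:boundary} adds the constraint $2401v>s$. So it suffices to produce, for each $n$, a finite family of closed regions that covers $\mathcal D_n$ and consists entirely of excluded members, in the sense of Subsection~\ref{subsec:boxes}.

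\textbf{Low degrees, $4\le n\le 26$.} The covering is a binary tree of boxes in the coordinates $(A,\eta,f)$. The map $(\eta,f)\mapsto(r,v)$ sends $[0,1]^2$ onto $\{r+v\le1\}$, and the complement of this image is excluded by $s<1$. So the leaves cover $\mathcal D_n$ by induction on the tree.

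For each leaf, I would recompute one exclusion in the directed dyadic arithmetic of Subsection~\ref{subsec:arithmetic}. The options are:
\begin{itemize}
\item an incompatible restriction from Proposition~\ref{prop:restrictions};
\item the direct test \eqref{eq:directtest};
\item a centered test via $\mathcal E_{\mathrm{int}}$ or $\mathcal E_{\mathrm{coef}}$, through Corollaries~\ref{cor:Tint} and \ref{cor:Tcoeff} and Proposition~\ref{prop:centered};
\item the shifted endpoint test \eqref{eq:shifted}, or the dual form \eqref{eq:dual} with a suitable $T$ and $\lambda$;
\item for $n=4$, the quartic rule $2401f^2\le1$.
\end{itemize}
The integrals are bounded with the chord rule of Lemma~\ref{lem:chord}, which is valid because the exponents are at least $1/2$. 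Inside these integrals, $b$ is eliminated monotonically using \eqref{eq:explicitbounds}.

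\textbf{High degrees, $27\le n\le2047$.} Here the regions are prisms over $(A,s)$-boxes, and each box serves a whole degree block. Uniformity in $n$ comes from the monotonicity facts of Subsection~\ref{subsec:blocks}: $c_m<5/3$, $c_{m,2}<3$, the factor in $K$ is increasing in $n$, and restriction~\ref{prop:restrictions2} is handled through $\theta$. Exclusion of a prism by the centered test uses the cubic criterion \eqref{eq:cubicmono}, which removes the nonlinear dependence on $r$.

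Once all leaves are certified excluded, Lemma~\ref{lem:certificate} yields the theorem in each degree. The main obstacle is not logical but quantitative. One has to make sure every leaf genuinely closes, especially near $s\to1$ with $v\to0$. In that regime all the tests are nearly tight, because $z^n-1$ is extremal. What rescues it is the variance factor in the centered test, together with the quartic local estimate when $n=4$. In practice I would set the mesh parameter $q$ and the choices of $T$ adaptively, splitting a box until one test succeeds. Failure of the splitting to terminate would indicate a gap that the analytic ingredients must fill, as Proposition~\ref{prop:quarticlocal} does in degree four.
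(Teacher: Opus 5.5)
Your proposal is correct and follows the paper's own argument: you reduce to Lemma~\ref{lem:certificate}, cover $\mathcal D_n$ by per-degree binary trees in $(A,\eta,f)$ for $4\le n\le26$ and by $(A,s)$-prisms over degree blocks for $27\le n\le2047$, and replay each leaf's exclusion in directed dyadic arithmetic. Beyond this plan, the paper's proof rests on the recorded certificates actually closing (no unresolved leaf) and on an explicit check that the degree blocks of Table~\ref{tab:blocks} are consecutive, without gap or overlap, with union exactly $\{27,\ldots,2047\}$; you should state that coverage check rather than leave it implicit.
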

 
\begin{proof}[Computer-assisted verification]
For $4\le n\le26$, the three-dimensional certificate supplies one binary tree per degree, with $150\,845$ leaves in total, distributed as in Table~\ref{tab:joint}. For $27\le n\le2047$, the certificate of Subsection~\ref{subsec:blocks} supplies $27\,429$ leaves over the degree blocks of Table~\ref{tab:blocks}, which are checked to be consecutive, without gap or overlap, and to have union exactly $\{27,\ldots,2047\}$. The replay verifies every leaf of both certificates and finds none unresolved, so Lemma~\ref{lem:certificate} applies in every degree $4\le n\le2047$.
\end{proof}
 
\vspace{-3pt}
\begin{table}[htbp]
\centering
\caption{Three-parameter certificate: complete coverage in each degree.}
\vspace{-4pt}
\label{tab:joint}
\begin{tabular}{rr@{\qquad\quad}rr@{\qquad\quad}rr}
\toprule
Degree & Accepted regions & Degree & Accepted regions & Degree & Accepted regions\\
\midrule
4  & 12\,604 & 12 & 4\,441  & 20 & 3\,366\\
5  & 7\,435  & 13 & 3\,437  & 21 & 3\,041\\
6  & 6\,902  & 14 & 17\,030 & 22 & 2\,812\\
7  & 14\,288 & 15 & 10\,536 & 23 & 2\,562\\
8  & 16\,299 & 16 & 5\,215  & 24 & 2\,365\\
9  & 10\,095 & 17 & 3\,992  & 25 & 2\,160\\
10 & 7\,935  & 18 & 3\,381  & 26 & 1\,898\\
11 & 6\,299  & 19 & 2\,752  &    & \\
\midrule
\multicolumn{5}{r}{Total accepted regions} & 150\,845\\
\bottomrule
\end{tabular}
\end{table}
\vspace{-6pt}
 
\begin{table}[htbp]
\centering
\caption{Two-parameter certificate, allowing integer degree blocks.}
\vspace{-4pt}
\label{tab:blocks}
\begin{tabular}{rr@{\quad\quad}rr@{\quad\quad}rr}
\toprule
Degree & Accepted regions & Degree & Accepted regions & Degree & Accepted regions\\
\midrule
27     & 1\,885 & 72--107  & 1\,247 & 303--605   & 1\,536\\
28--31 & 3\,082 & 108--161 & 1\,267 & 606--1211  & 2\,202\\
32--47 & 4\,938 & 162--242 & 1\,814 & 1212--2047 & 2\,552\\
48--71 & 5\,461 & 243--302 & 1\,445 &            & \\
\midrule
\multicolumn{5}{r}{Total accepted regions} & 27\,429\\
\bottomrule
\end{tabular}
\end{table}

\subsection{Proof of the main theorem}
We are now finally ready to prove Theorem~\ref{thm:main}. 
 
\begin{proof}[Proof of Theorem~\ref{thm:main}]
After the degenerate cases, the normalization \eqref{eq:normal} reduces failure of the theorem at a prescribed zero to \eqref{eq:counter}. Theorem~\ref{thm:large} excludes this for $n\ge2048$ and Proposition~\ref{prop:finite} for $4\le n\le2047$, while the cases $n=2$ and $n=3$ were already proved in \cite{KPPSS}.
\end{proof}

\subsection*{Computational assistance}
OpenAI Codex was used in exploring the estimates, developing and running the certificate implementation, and proof-reading this manuscript. The accompanying material records the exact inputs and replay results of the verification.

\newpage
\appendix
\section{Implementation and reproduction}\label{app:package}
This appendix connects the mathematical exclusions with their names in the verifier, so that a recorded terminal test can be traced back to its proof, and specifies the exact finite inputs and the commands needed to replay their verification. The numerical tables in the paper summarize those inputs; the file digests below identify them.

\subsection{Implementation correspondence}\label{app:implementation}
The implementation is the package \path{borcea}, driven by \path{verify_all.py}: \path{interval.py} (directed integer arithmetic), \path{quadrature.py} (chord rule and meshes), \path{restrictions.py} (Proposition~\ref{prop:restrictions} and Lemma~\ref{lem:products}), \path{endpoint.py} (Proposition~\ref{prop:dual} and Corollary~\ref{cor:shifted}), \path{joint.py} (degrees $4$--$26$), \path{blocks.py} (degrees $27$--$2047$) and \path{certificate.py} (replay and coverage). The following table identifies the mathematical justification of the terminal exclusion families. A label records which check excluded a box first, so the same inequality may appear under different labels; every label is justified by the table below.

{
\renewcommand{\arraystretch}{1.33}
\begin{longtable}{>{\raggedright\arraybackslash}p{.355\textwidth}>{\raggedright\arraybackslash}p{.585\textwidth}}
\toprule
Exclusion family & Mathematical justification\\
\midrule
\endfirsthead
\toprule Exclusion family & Mathematical justification\\\midrule
\endhead
\bottomrule\endfoot
\texttt{domain} &
 $A\le m$ (Lemma~\ref{lem:critical}), which defines the search space.\\[4pt]
\texttt{derivative-product} &
 Proposition~\ref{prop:restrictions}.\ref{prop:restrictions2}, made uniform over degree blocks as in Subsection~\ref{subsec:blocks}. In a single degree, it also gives the lower bound $s\ge\frac mn n^{2/m}/(1+A)$.\\[4pt]
\texttt{restriction-1}; \texttt{restriction-3}; \texttt{restriction-5} &
 Proposition~\ref{prop:restrictions}.\ref{prop:restrictions1}, \ref{prop:restrictions3} and \ref{prop:restrictions5}, evaluated on boxes as in Remark~\ref{rem:boxeval}. The box is empty after $A$ is contracted by \ref{prop:restrictions3}, after $r$ is contracted to the admissible interval of Remark~\ref{rem:boxeval}, or after $v=s-r$ is contracted with it; or the lower bound for $b$ exceeds $2\sqrt{Ar}$.\\[4pt]
\texttt{negative-endpoint}; \texttt{negative-interior}; \texttt{negative-D} &
 $\beta(t)\ge0$ and $|D(1)|^2\ge0$, against the upper bounds \eqref{eq:betaD} and \eqref{eq:explicitbounds}.\\[4pt]
\texttt{direct} &
 Proposition~\ref{prop:direct}, with the integration rules of Subsection~\ref{subsec:quadrature}.\\[4pt]
\texttt{centered-coefficient}; \texttt{centered-coefficient-boundary} &
 Lemma~\ref{lem:coefficients} and Corollary~\ref{cor:Tcoeff}, with the criteria of Proposition~\ref{prop:centered}.\\[4pt]
\texttt{centered-integral}; \texttt{centered-integral-boundary} &
 Lemma~\ref{lem:centered} and Corollary~\ref{cor:Tint}, with the criteria of Proposition~\ref{prop:centered} and the integration rules of Subsection~\ref{subsec:quadrature}.\\[4pt]
\texttt{centered-monotone} &
 Equation~\eqref{eq:cubicmono} (two-parameter part only).\\[4pt]
\texttt{shifted:}$T$ & Corollary~\ref{cor:shifted}.\\[4pt]
\texttt{dual:}$T$ &
 Proposition~\ref{prop:dual}, with $\mu$ enclosed as in Remark~\ref{rem:boxeval}, using Proposition~\ref{prop:restrictions}.\ref{prop:restrictions4} (degrees $4$--$13$).\\[4pt]
\texttt{quartic-local} &
 Proposition~\ref{prop:quarticlocal}, recognized by \eqref{eq:recognize_local} through $2401v\le s$ (degree four only).
\end{longtable}
}

\subsection{Repository and exact inputs}\label{app:reproduce}
The public repository and its immutable release contain the verifier package with both certificates, the machine-readable report, and a checksum manifest:
\begin{center}
\begin{tabular}{ll}
Repository & \RepositoryURL\\
Immutable release & \ReleaseURL\\
Permanent archive & \ArchiveDOI
\end{tabular}
\end{center}
The two input files used for the reported computation have the following SHA-256 digests. Each two-line display is one concatenated 64-character hexadecimal string:
\begin{align*}
 \texttt{certificates/joint.json}:~
 &\texttt{8fb676db95c9e8d513d3fe2bc3ecc165}\\[-2pt]
 &\texttt{444cee13e38c876abd834f169ab15c41}.
\end{align*}
\vspace{-16pt}
\begin{align*}
 \texttt{certificates/blocks.json}:~
 &\texttt{69fe29be74825b9dba78e08c637e3ff5}\\[-2pt]
 &\texttt{7603cc5224f54dec17453929e59a76a8}.
\end{align*}
The manifest identifies the code and every accompanying data file. Digests establish file identity; they do not establish the inequalities or replace execution of the checker.
 
\subsection{Replaying the proof package}
Python 3.10 or later and the standard library suffice. From the root directory of the accompanying archive, run
\begin{lstlisting}
python verify_all.py --workers 6
\end{lstlisting}
This command runs the arithmetic, quadrature, coefficient-identity, quartic-constant, restriction-soundness, rejection and obstruction tests, replays both finite partitions, and verifies that they cover every degree from $4$ to $2047$ exactly once. The expected result is \texttt{PASS}, degrees $[4,2047]$, with $150\,845$ regions in the joint certificate and $27\,429$ in the block certificate. Each certificate is complete on its own stated range; no incomplete lower-degree certificate is needed. The worker count changes execution scheduling, not the inequalities or directed arithmetic.
 
The self-tests can also be run on their own, and the certificates can be named explicitly:
\begin{lstlisting}
python tests/selftest.py
python verify_all.py --joint certificates/joint.json --blocks certificates/blocks.json --workers 6
\end{lstlisting}
The report is \path{reports/report.json}, with one result for every degree from $4$ to $26$ and for every degree block. For an integer degree block, the reported number of regions refers to the whole block, not an independently generated partition for each integer in that block.
 
The certificates were generated by \path{tools/refine.py}, which the verifier neither imports nor needs. Its conclusion depends on reconstructing complete coverage and checking every leaf, rather than on a search budget or a previously recorded success flag.

\end{document}